\newtheorem{lemma}{Lemma}
\newtheorem{theorem}{Theorem}
\newtheorem{proposition}{Proposition}
\newtheorem{cor}%[theorem]
{Corollary}
\newtheorem{defn}%[theorem]
{Definition}
\newcommand{\R}{\mathbb{R}}
\newcommand{\II}{\mathcal{I}}
\DeclareMathOperator*{\argmin}{arg\,min}
\title{Combinatorial Courant-Fischer-Weyl Minimax Principle on %Characterizations of 
Cheeger $k$-constants of Weighted Forests}
\author{Zijun Meng\thanks{Department of Mathematics, The Chinese University of Hong Kong,
Hong Kong, China. Email address: 1155194127@link.cuhk.edu.hk} \and Dong Zhang\thanks{School of Mathematical Sciences, Peking University,
100871 Beijing, China. Email address: dongzhang@math.pku.edu.cn}}
\date{}
\begin{document}

\maketitle

\begin{abstract}
We establish novel max-min and minimax characterizations of Cheeger $k$-constants in weighted forests, thereby providing the first combinatorial analogue of the Courant-\\Fischer-Weyl minimax principle. As for applications, we prove that the forest 1-Laplacian variational eigenvalues are independent of the choice of typical indexes; we propose a refined higher order Cheeger inequality involving numbers of loops of graphs and $p$-Laplacian eigenvalues; %this also implies 
and we present a combinatorial proof for the equality $h_k=\lambda_k(\Delta_1)$ which connects the 1-Laplacian variational eigenvalues and the multiway Cheeger constants. 
\end{abstract}
\tableofcontents

\section{Introduction}
The Cheeger $k$-constant is omnipresent in mathematics, it appears in different names in different fields, such as the \emph{higher isoperimetric numbers} (Daneshgar-Hajiabolhassan-Javadi, 2010, \cite{Daneshgar-Hajiabolhassan-Javadi}) and the \emph{$k$-way isoperimetric constant} (Mimura, 2016, \cite{Mimura}) in graph theory, the \emph{$k$-th Cheeger constant} (Bobkov-Parini, 2018, \cite{Bobkov-Parini}) for Euclidean spaces, and the \emph{connectivity spectrum} (Hassannezhad-Miclo, 2020, \cite{Hassannezhad-Miclo}) for Riemannian manifolds. %The Cheeger $k$-constant also has important connections with inequalities involving the $k$-th smallest eigenvalue of the Laplacian in the aforementioned fields \cite{Lee-Gharan-Trevisan,Hassannezhad-Miclo}.

The core contribution of the Cheeger k-constant lies in the fundamental inequality involving the Laplacian spectrum, and the connection to multi-partitioning and clustering problems in the aforementioned fields \cite{Lee-Gharan-Trevisan,Hassannezhad-Miclo}. Moreover, many of these results have been extended to the case of nonlinear eigenproblems \cite{Bungert}, particularly those of the $p$-Laplacian type \cite{Tudisco,Keller}.  %The Cheeger $k$-constant is also important in its own right.

In view its importance, we establish several max-min and min-max reformulations of multiway Cheeger constants of forests. These reformulations can be viewed as combinatorial analogues of the Courant-Fischer-Weyl minimax principle (see, for instance, Corollary III.1.2 of \cite{Bhatia}). We also provide two novel applications:
\begin{itemize}
\item Based on these reformulations, we first prove that several different types of min-max eigenvalues of the forest 1-Laplacian coincide. In fact, different types of min-max eigenvalues are defined using different indexes. A fundamental problem asks whether these min-max eigenvalues are independent of the choice of admissible indexes. We actually get an affirmative answer to this question for 1-Laplacians of forests. 
\item These max-min reformulations directly yield a rigorous proof of the 1-Laplacian Cheeger identity $\lambda_k(\Delta_1)=h_k$ for forests, which is in fact the first combinatorial proof. Combining this fact with previous results on the monotonicity of $p$-Laplacian eigenvalues, a refined multi-way Cheeger inequality for forests is established by using graph $p$-Laplacian instead of the normalized graph Laplacian. 
\end{itemize}

\subsection{%Known Results on 
Higher Order Cheeger Inequalities}
A \emph{(finite, undirected) weighted graph} $G$ is a quadruple $(V,E,\mu,w)$ with a \emph{vertex set} $V=\{1,\cdots,n\}$, an \emph{edge set} \[E\subseteq\{\{u,v\}:u\ne v\text{ in }V\},\] a \emph{vertex weight} $\mu:V\to \mathbb{R}^+$, and an \emph{edge weight} $w:E\to \R^+$. For simplicity, we use $w_{uv}$ and $\mu_v$ to denote $w(\{u,v\})$ and $\mu(v)$, respectively. 

Fix a weighted graph $G=(V,E,\mu,w)$. For any subset $A\subseteq V$ of vertices, define the \emph{boundary} $\partial A$ of $A$ by \[\partial A:=\{\{u,v\}\in E\colon u\in A, v\not\in A\}.\] Define the \emph{Cheeger $k$-constant} of $G$ by \[h_k(G):=\min_{\text{subpartitions }(A_1,\dots,A_k)\text{ of }V}~\max_{1\le i\le k}\phi(A_i),\] where \[\phi(A_i):=\phi_{\mu,w}(A_i):=\frac{w(\partial A)}{\mu(A)}:=\frac{\sum\limits_{\{u,v\}\in\partial A}w_{uv}}{\sum\limits_{v\in A}\mu_v}\] is the \textsl{expansion} (Lee-Oveis Gharan-Trevisan, \cite{Lee-Gharan-Trevisan}) of $A_i$, and by $(A_1,\dots,A_k)$ being a \emph{subpartition} of $V$ we mean that $A_i$ are pairwise disjoint nonempty subsets of $V$. 

Let $\lambda_{k}(\Delta_p)$ denote the $k$-th min-max eigenvalue of the graph $p$-Laplacian $\Delta_p$, where the relevant concepts and terminology are detailed in Section \ref{sec:minmax-eigen-1-Lap}. When $p=2$, $\Delta_2$ is called the \emph{(normalized) graph Laplacian}, and $\lambda_{k}(\Delta_2)$ is the $k$-th smallest eigenvalue of $\Delta_2$.  
We briefly list some known results on higher order Cheeger inequalities: 
\begin{itemize}
\item 
Lee-Oveis Gharan-Trevisan \cite{Lee-Gharan-Trevisan}: 
There exists a universal constant $C>0$ such that
$$\frac{1}{Ck^4}h_k(G)^2\le\lambda_k(\Delta_2)\le2h_k(G).$$

Note that the factor $\frac{1}{Ck^4}$ in the lower bound \textbf{depends on $k$}.
\item
Tudisco-Hein \cite{Tudisco}: For $p>1$, let $x$ be an eigenvector corresponding to the eigenvalue $\lambda_k(\Delta_p)$, and assume that $x$ has $m$ strong nodal domains. Then, $$ \frac{2^{p-1}}{p^p}h_m(G)^p\le\lambda_k(\Delta_p)\le 2^{p-1}h_k(G).$$

Here, the subscript $m$ of $h_m(G)^p$ in the lower bound \textbf{depends on the nodal domains of an eigenvector}. 
\item
Daneshgar-Javadi-Miclo \cite{Daneshgar,Miclo}: For any simple tree, 
$$\frac{1}{2}h_k(G)^2\le \lambda_{k}(\Delta_2)\le 2 h_k(G).$$

%The lower bound $\frac12h_k^2$ is sharp, but the inequality only works for the \textbf{Laplacian of forests}. 
\item
Deidda-Putti-Tudisco \cite{Deidda}: For any weighted tree, $$\lambda_{k}(\Delta_1)= h_k(G).$$ 

%However, the authors claimed this equality \textbf{without a rigorous proof}. 
\end{itemize}
Since the Cheeger $k$-constant $h_k(G)$ is defined via the minimax process, and $\lambda_{k}(\Delta_2)$ possesses a minimax representation according to the Courant-Fischer-Weyl theorem, a thorough examination of the minimax principle may deepen our understanding of these higher order Cheeger inequalities. This will lead to the discussion in subsequent sections.

\subsection{Minimax Principle}
There are some fundamental‌ equalities in the form of ``min-max = max-min'', such as the von Neumann minimax theorem for convex-concave functions \cite{Rockafellar} and the Courant-Fischer-Weyl minimax principle on the Rayleigh quotient \cite{Bhatia}. 
These capture very important information of the objective function, for example, von Neumann minimax theorem reveals the saddle data of a convex-concave function, while Courant-Fischer-Weyl theorem establishes minimax representation of eigenvalues of a self-adjoint operator. Since this paper focuses on min-max eigenvalues, we recall the Courant-Fischer-Weyl minimax principle as follows.

\begin{itemize}
\item For any symmetric matrix $\mathbf{A}\in \mathbb{R}^{n\times n}$, the $k$-th smallest eigenvalue $\lambda_k(\mathbf{A})$ can be written as 
$$ \lambda_k(\mathbf{A})=\min_{\substack{\text{linear subspaces }X\text{ of }\mathbb R^n\\\dim X\ge k}}~\max_{x\in X\setminus\{0\}} \frac{x^\top \mathbf{A} x}{x^\top x} = \max_{\substack{\text{linear subspaces }X\text{ of }\mathbb R^n\\\dim X\ge n-k+1}}~\min_{x\in X\setminus\{0\}} \frac{x^\top \mathbf{A} x}{x^\top x}.$$
\end{itemize}

It is worth noting that von Neumann minimax theorem involves duality which is commonly used in optimization, and thus many different analogues and generalizations have been obtained, e.g., there is a famous combinatorial version called the max-flow min-cut theorem. However, to the best of our knowledge, no combinatorial analogues of the Courant-Fischer-Weyl minimax principle had been proposed prior to our paper. 
Actually, if we focus solely on the ``min-max = max-min'' equality %\eqref{eq:minmax=maxmin-Courant} itself, 
\begin{equation}\label{eq:minmax=maxmin-Courant}
\min_{\substack{\text{linear subspaces }X\text{ of }\mathbb R^n\\\dim X= k}}~\max_{x\in X\setminus\{0\}} \frac{x^\top \mathbf{A} x}{x^\top x} = \max_{\substack{\text{linear subspaces }X\text{ of }\mathbb R^n\\\dim X= n-k+1}}~\min_{x\in X\setminus\{0\}} \frac{x^\top \mathbf{A} x}{x^\top x},   
\end{equation}
it would be surprising if \eqref{eq:minmax=maxmin-Courant} has no generalizations. %generalized forms.

This paper gives the first combinatorial and nonlinear analogues for the Courant-Fischer-Weyl minimax principle shown in terms of both Cheeger k-constant and k-th minimax $L^1$-Rayleigh quotient of forests. Specifically, we prove:
\begin{theorem}[combinatorial Courant-Fischer-Weyl minimax principle]\label{thm:CombinatorialCourant-Fischer}
For any forest $G=(V,E,\mu,w)$, we have \[\min_{\mathrm{subpartitions~}(A_1,\dots,A_{k})\mathrm{~of~}V}~~\max_{B\in \mathcal{U}(A_1,\cdots,A_{k})}\phi(B)=\max_{\mathrm{subpartitions~}(A_1,\dots,A_{n-k+1})\mathrm{~of~}V}~~\min_{B\in \mathcal{U}(A_1,\cdots,A_{n-k+1})}\phi(B),\]
where \[\mathcal{U}(A_1,\cdots,A_k)=\{A_{i_1}\cup\cdots\cup A_{i_q}:1\le i_1<\cdots<i_q\le k,\,1\le q\le k\}\] denotes the union-closed family generated by the subpartition $(A_1,\dots,A_{k})$ of $V$.
\end{theorem}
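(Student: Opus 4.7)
The plan is to reduce the left-hand side to the Cheeger $k$-constant $h_k(G)$ and then establish the resulting equality $h_k(G)=\max_{(D_1,\dots,D_{n-k+1})}\min_{B\in\mathcal U(D_1,\dots,D_{n-k+1})}\phi(B)$ by two separate inequalities. The reduction rests on weak subadditivity of $\phi$ on disjoint unions: if $B_1,B_2\subseteq V$ are disjoint, then $\mu(B_1\cup B_2)=\mu(B_1)+\mu(B_2)$ and $w(\partial(B_1\cup B_2))\le w(\partial B_1)+w(\partial B_2)$ (the discarded term being twice the weight of the edges joining $B_1$ and $B_2$), so that $\phi(B_1\cup B_2)$ is a convex combination of $\phi(B_1)$ and $\phi(B_2)$ and thus at most $\max(\phi(B_1),\phi(B_2))$. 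Iterating this bound yields $\max_{B\in\mathcal U(A_1,\dots,A_k)}\phi(B)=\max_i\phi(A_i)$, so the left-hand side coincides with $h_k(G)$.

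For $h_k(G)\ge$ right-hand side, I would show that every subpartition $(D_j)_{j=1}^{n-k+1}$ contains some union $B\in\mathcal U(D_1,\dots,D_{n-k+1})$ with $\phi(B)\le h_k(G)$. Fix an optimal $(A_i^*)_{i=1}^k$ realizing $h_k(G)$. The two subpartitions jointly carry $k+(n-k+1)=n+1$ nonempty parts on only $n$ vertices of a forest with $n-1$ edges, and a refined combinatorial comparison of their intersection pattern---exploiting the tree structure to control expansions of mixed unions via the subadditivity above---should produce $S\subseteq\{1,\dots,n-k+1\}$ such that $\bigcup_{j\in S}D_j$ is dominated in expansion by some $A_i^*$, yielding $\phi\bigl(\bigcup_{j\in S}D_j\bigr)\le\phi(A_i^*)\le h_k(G)$.

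For the reverse inequality $h_k(G)\le$ right-hand side, I would construct an explicit dual from $(A_i^*)$. Select a vertex set $O\subseteq V$ with $|O|=k-1$, picking one ``separator'' vertex per boundary edge of the forest joining two distinct parts of $(A_i^*)$, and take $(D_j)$ to be the $n-k+1$ singletons of $V\setminus O$. The central claim is that $O$ can be chosen to be a \emph{hitting set} for the sub-threshold family $\{B\subseteq V:\phi(B)<h_k(G)\}$; granting this, every $B\in\mathcal U(D_1,\dots,D_{n-k+1})$ is a nonempty subset of $V\setminus O$ disjoint from $O$, and therefore satisfies $\phi(B)\ge h_k(G)$, which is exactly the lower bound we need.

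The main obstacle is this hitting-set claim. One must argue that in a forest the sub-threshold family $\{B\subseteq V:\phi(B)<h_k(G)\}$ admits a tree-compatible, essentially laminar structure, so that $k-1$ well-chosen vertices meet every member of it. The argument must also accommodate the cases where the optimal $(A_i^*)$ has disconnected parts or fails to cover $V$ (both of which genuinely occur in weighted forests), so that ``separator'' must be defined with care through the forest structure; non-uniqueness of the optimal subpartition and of the corresponding $O$ also needs to be handled consistently throughout.
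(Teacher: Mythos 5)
Your overall architecture matches the paper's: reduce the left-hand side to $h_k(G)$ via the mediant inequality (this part is correct and is exactly Lemma~\ref{lemma:basic-phi} and Proposition~\ref{prop:unioninv}), then prove the two inequalities against $h_k(G)$ separately. For the direction $h_k(G)\ge$ RHS, your counting observation ($n+1$ parts on $n$ vertices) is the right seed, but ``a refined combinatorial comparison \dots should produce'' is a placeholder, not an argument; the mechanism that actually closes it is a purely set-theoretic pigeonhole/induction statement (Lemma~\ref{lem:pigeonhole}): some union of the $A_i^*$'s \emph{equals} some union of the $D_j$'s, and then the mediant bound you already proved gives $\phi$ of that common set $\le\max_i\phi(A_i^*)=h_k(G)$. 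Note this direction needs no forest structure at all, so your appeal to the $n-1$ edges there is a red herring.

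The genuine gap is the reverse direction. Your ``central claim'' --- that one can choose $k-1$ vertices $O$ hitting every $B$ with $\phi(B)<h_k(G)$ --- is not a technical lemma to be granted; it \emph{is} the theorem (it is precisely the statement $\underline{\ell_k}(G)\ge h_k(G)$), and the laminar-structure heuristic you offer for it will not go through as stated. The difficulty is that for an arbitrary optimal subpartition $(A_i^*)$, endpoints of the separating edges need not hit the sub-threshold family: a set with $\phi(B)<h_k(G)$ can sit entirely inside $V\setminus\bigcup_iA_i^*$, or inside some $A_i^*\setminus\{v_i\}$. The paper's proof (Steps 1--5 of Theorem~\ref{thm:h_k=H_k}) dissolves exactly these obstructions by choosing the optimal subpartition extremally in a lexicographic sense --- minimizing first the maximal expansion, then the number of parts attaining it, then the total vertex weight --- which forces each part to be connected (so that in a tree the $k$ parts can be separated by deleting one vertex from each of $k-1$ of them), and then two perturbation arguments show $h(V\setminus(A_1\cup\dots\cup A_{k-1}))=h_k(G)$ and $h(A_i\setminus\{v_i\})\ge h_k(G)$. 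Without some substitute for this extremal selection and the ensuing case analysis, your proposal establishes only the easy inequality and the easy reformulation, not the equality.
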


This form can be viewed as a combinatorial analog of the minimax principle on real symmetric matrices. 
It is known that \eqref{eq:minmax=maxmin-Courant} implies the Courant-Fischer-Weyl minimax principle for graph Laplacian:
\begin{equation}\label{eq:classical-minmax-Lap}
\min_{\substack{\text{linear subspaces }X\text{ of }\mathbb R^n\\\dim X= k}}~\max_{x\in X\setminus\{0\}} \frac{\sum\limits_{u\sim v}w_{uv}(x_u-x_v)^2}{\sum\limits_v\mu_vx_v^2} = \max_{\substack{\text{linear subspaces }X\text{ of }\mathbb R^n\\\dim X= n-k+1}}~\min_{x\in X\setminus\{0\}} \frac{\sum\limits_{u\sim v}w_{uv}(x_u-x_v)^2}{\sum\limits_v\mu_vx_v^2},
\end{equation}
where we write $u\sim v$ to mean $\{u,v\}$ is an edge. This minimax principle characterizes the $k$-th eigenvalue  $\lambda_k(\Delta_2)$ of the linear Laplacian on graphs. We propose a nonlinear analog of \eqref{eq:classical-minmax-Lap} as follows:
\begin{theorem}[nonlinear Courant-Fischer-Weyl minimax principle]\label{thm:nonlinearCourant-Fischer}
For any forest, 
\begin{align*}\min_{\substack{\mathrm{linear~subspaces~}X\mathrm{~of~}\mathbb R^n\\\dim X= k}}~\max_{x\in X\setminus\{0\}} \frac{\sum\limits_{u\sim v}w_{uv}|x_u-x_v|}{\sum\limits_v\mu_v|x_v|} = \max_{\substack{\mathrm{linear~subspaces~}X\mathrm{~of~}\mathbb R^n\\\dim X= n-k+1}}~\min_{x\in X\setminus\{0\}} \frac{\sum\limits_{u\sim v}w_{uv}|x_u-x_v|}{\sum\limits_v\mu_v|x_v|}
\end{align*}
\end{theorem}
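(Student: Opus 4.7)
The plan is to deduce Theorem~\ref{thm:nonlinearCourant-Fischer} from Theorem~\ref{thm:CombinatorialCourant-Fischer}, by identifying the Rayleigh extrema on the subspace spanned by indicators of a subpartition with the expansion extrema over the corresponding union-closed family. Abbreviate
\[R(x) := \frac{\sum_{u\sim v} w_{uv}|x_u-x_v|}{\sum_v \mu_v|x_v|}, \quad \tilde{\lambda}_k := \min_{\dim X = k} \max_{x \in X\setminus\{0\}} R(x), \quad \tilde{\mu}_k := \max_{\dim X = n-k+1} \min_{x \in X\setminus\{0\}} R(x),\]
and write $h_k$, $g_k$ for the left- and right-hand sides of Theorem~\ref{thm:CombinatorialCourant-Fischer}, so that $h_k=g_k$.

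The centerpiece, which I would prove first, is the following lemma: for any subpartition $(A_1,\dots,A_m)$ of $V$ and $X := \mathrm{span}(\mathbf{1}_{A_1},\dots,\mathbf{1}_{A_m})$,
\[\max_{x \in X \setminus \{0\}} R(x) = \max_{B \in \mathcal{U}(A_1,\dots,A_m)} \phi(B), \quad \min_{x \in X \setminus \{0\}} R(x) = \min_{B \in \mathcal{U}(A_1,\dots,A_m)} \phi(B).\]
\emph{Sign reduction:} since $\mathrm{supp}(x^+)$ and $\mathrm{supp}(x^-)$ are disjoint, an edge-by-edge check yields $|x_u-x_v| = |x^+_u-x^+_v| + |x^-_u-x^-_v|$ and $|x_v| = x^+_v + x^-_v$, so $R(x)$ is a convex combination of $R(x^+)$ and $R(x^-)$; the extrema of $R$ on $X$ therefore coincide with those on the nonnegative cone of $X$. \emph{Layer cake:} for $x = \sum_i c_i \mathbf{1}_{A_i}$ with $c_i \geq 0$, reorder so that $c_{i_1} \geq \cdots \geq c_{i_m} \geq 0$ and set $B_j := A_{i_1} \cup \cdots \cup A_{i_j} \in \mathcal{U}(A_1,\dots,A_m)$. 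A direct computation (Abel summation / discrete coarea formula) gives
\[R(x) = \frac{\sum_{j=1}^m (c_{i_j}-c_{i_{j+1}}) w(\partial B_j)}{\sum_{j=1}^m (c_{i_j}-c_{i_{j+1}}) \mu(B_j)} \quad (c_{i_{m+1}} := 0),\]
and factoring out $\mu(B_j)$ exhibits $R(x)$ as a convex combination of the numbers $\phi(B_j)$ with nonnegative weights $(c_{i_j}-c_{i_{j+1}})\mu(B_j)$. Hence $\min_j \phi(B_j) \leq R(x) \leq \max_j \phi(B_j)$; the bounds are attained by taking $x = \mathbf{1}_B$ for any $B \in \mathcal{U}(A_1,\dots,A_m)$, which is itself in $X$.

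Armed with the lemma, substitute $X := \mathrm{span}(\mathbf{1}_{A_1^*},\dots,\mathbf{1}_{A_m^*})$ for an optimal subpartition on each side of Theorem~\ref{thm:CombinatorialCourant-Fischer}; since the parts are nonempty and pairwise disjoint, $\dim X$ equals the number of parts. This yields $\tilde{\lambda}_k \leq h_k$ (from a size-$k$ minimizer) and $\tilde{\mu}_k \geq g_k$ (from a size-$(n-k+1)$ maximizer). On the other hand, for any $k$-dimensional $X_k$ and $(n-k+1)$-dimensional $X_{n-k+1}$ in $\R^n$ one has $\dim(X_k \cap X_{n-k+1}) \geq 1$, so some $x \neq 0$ lies in both, giving $\min_{y \in X_{n-k+1}\setminus\{0\}} R(y) \leq R(x) \leq \max_{y \in X_k\setminus\{0\}} R(y)$; optimizing over the two subspaces produces $\tilde{\mu}_k \leq \tilde{\lambda}_k$. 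Chaining,
\[\tilde{\lambda}_k \;\leq\; h_k \;=\; g_k \;\leq\; \tilde{\mu}_k \;\leq\; \tilde{\lambda}_k,\]
where the middle equality is exactly Theorem~\ref{thm:CombinatorialCourant-Fischer}. All four quantities therefore agree, which is the identity asserted by Theorem~\ref{thm:nonlinearCourant-Fischer}.

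The main obstacle is the Rayleigh-versus-expansion lemma, and within it the sign-reduction step: the edge-wise identity $|x_u-x_v| = |x^+_u-x^+_v| + |x^-_u-x^-_v|$ must be verified case by case over the configurations of $\{u,v\}$ relative to the sign decomposition, whereas the layer-cake identity is then a routine discrete coarea computation. It is worth noting that the forest hypothesis enters only through Theorem~\ref{thm:CombinatorialCourant-Fischer}: the reduction above is valid for any weighted graph on which the combinatorial identity $h_k=g_k$ has been established.
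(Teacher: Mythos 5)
Your proposal is correct and follows essentially the same route as the paper: the paper also squeezes the two subspace quantities between $h_k(G)$ and $\ell_k(G)$ (your $g_k$) via the sign decomposition $x=x^+-x^-$, the layer-cake/Abel-summation identity expressing $R(x)$ as a weighted mean of $\phi$-values of level sets, a dimension-count intersection argument, and the combinatorial identity $h_k=\ell_k$ for forests. The only cosmetic difference is that you package the two one-sided estimates (the paper's selection lemma, Lemma \ref{lem:levelset}, and the first display in the proof of Lemma \ref{lem:general-graph-delta}) into a single two-sided ``Rayleigh quotient on an indicator span equals expansion on the union-closed family'' lemma.
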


It is noteworthy that both equalities in Theorem \ref{thm:CombinatorialCourant-Fischer} and Theorem \ref{thm:nonlinearCourant-Fischer} are equal to $h_k(G)$ when $G$ is a forest. 

These results are fascinating because equalities of the ``min-max = max-min'' form rarely arise in nonlinear or combinatorial situations. For example, in the field of nonlinear eigenvalue problems, there are many conjectures and open problems on ``min-max = max-min''  equalities \cite{Diening}. 

\section{Main Results}
To express results more concisely, we shall adopt some notation and conventions. 

Let \[\mathcal{P}_k(V):=\{(A_1,\dots,A_{k}):~\varnothing\ne A_i\subseteq V,\,A_i\cap A_j=\varnothing\,\forall i\ne j\}\] be the set of all subpartitions of $V$ with $k$ subsets. 

We have the following elementary observation:
\begin{proposition}\label{prop:unioninv}
%It can be verified that 
For any weighted graph $G=(V,E,\mu,w)$,
$$h_k(G):=\min_{(A_1,\dots,A_{k})\in \mathcal{P}_k(V)}~\max_{1\le i\le k}\phi(A_i)=\min_{(A_1,\dots,A_{k})\in \mathcal{P}_k(V)}~\max_{B\in \mathcal{U}(A_1,\cdots,A_{k})}\phi(B)$$
where \[\mathcal{U}(A_1,\cdots,A_k):=\{A_{i_1}\cup\cdots\cup A_{i_q}:1\le i_1<\cdots<i_q\le k,\,1\le q\le k\}\] stands for the union closed family generated by the $k$-tuple of sets $(A_1,\cdots,A_k)$. 
\end{proposition}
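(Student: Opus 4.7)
The plan is to prove the two minimization problems agree by establishing the stronger pointwise statement that for every subpartition $(A_1,\dots,A_k)\in\mathcal{P}_k(V)$, the inner maximum over the union-closed family $\mathcal{U}(A_1,\dots,A_k)$ already coincides with the max over the $A_i$ themselves. One inequality, $\max_{1\le i\le k}\phi(A_i)\le\max_{B\in\mathcal{U}}\phi(B)$, is immediate because each $A_i$ lies in $\mathcal{U}(A_1,\dots,A_k)$, so the only real content is the reverse bound.

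For the reverse bound I would fix an arbitrary element $B=A_{i_1}\cup\cdots\cup A_{i_q}\in\mathcal{U}(A_1,\dots,A_k)$ and argue in two steps. First, any edge of $\partial B$ has one endpoint in some $A_{i_j}$ and the other endpoint outside $B$, hence outside $A_{i_j}$; since the $A_{i_\ell}$ are pairwise disjoint, this assignment attaches each edge of $\partial B$ to a unique $\partial A_{i_j}$, yielding the disjoint inclusion $\partial B\subseteq\bigsqcup_{j}\partial A_{i_j}$ and therefore $w(\partial B)\le\sum_j w(\partial A_{i_j})$. Second, using $\mu(B)=\sum_j\mu(A_{i_j})$, the mediant (weighted-average) inequality gives
\[
\phi(B)=\frac{w(\partial B)}{\mu(B)}\le\frac{\sum_j w(\partial A_{i_j})}{\sum_j \mu(A_{i_j})}\le\max_{1\le j\le q}\frac{w(\partial A_{i_j})}{\mu(A_{i_j})}\le\max_{1\le i\le k}\phi(A_i).
\]

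Taking the maximum over $B\in\mathcal{U}(A_1,\dots,A_k)$ then delivers $\max_{B\in\mathcal{U}}\phi(B)=\max_i\phi(A_i)$ for each fixed subpartition, and minimizing over $\mathcal{P}_k(V)$ yields the proposition. The argument uses neither the forest hypothesis nor any serious graph-theoretic input, so I do not expect a genuine obstacle; the only point that needs care is the bookkeeping in the first step, namely verifying that the inclusion $\partial B\subseteq\bigsqcup_j\partial A_{i_j}$ is truly a disjoint union, so that no edge is double-counted in the weight estimate. This follows because each edge of $\partial B$ has its interior endpoint in exactly one $A_{i_j}$, and the external endpoint is not in $\bigcup_\ell A_{i_\ell}$, ruling out membership in any other $\partial A_{i_\ell}$.
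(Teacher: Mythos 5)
Your proof is correct and takes essentially the same route as the paper: the paper's Lemma \ref{lemma:basic-phi} establishes $\phi(A\cup B)\le\max\{\phi(A),\phi(B)\}$ by exactly your argument (subadditivity of the boundary weight, additivity of $\mu$, and the mediant inequality), and the proposition follows by applying this across the union-closed family. The only cosmetic difference is that you treat all $q$ parts in a single application of the weighted mediant inequality instead of iterating the two-set case.
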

For any weighted graph $G=(V,E,\mu,w)$ with $n$ vertices, let 
$$\ell_k(G):=\max_{(A_1,\dots,A_{n-k+1})\in \mathcal{P}_{n-k+1}(V)}~\min_{B\in \mathcal{U}(A_1,\cdots,A_{n-k+1})}\phi(B)$$
be the $k$-th max-min Cheeger constant, and let $$\underline{\ell_k}(G) :=\max_{A\subseteq V,~|A|=n-k+1}h(A)$$ be the $k$-th Dirichlet Cheeger constant \cite{DTZ}, where \[h(A):=\min_{B\subseteq A}\phi(B)\] 
denotes the Cheeger constant of $A$ with respect to $G$. 
%Then, for any graph, we can establish the inequality $h_k\ge \ell_k\ge \underline{\ell_k} $. 

%We can actually show that $h_k=\ell_k=\underline{\ell_k} $ hold for forests, which derives the following minimax equality for forests:

We adopt a novel way, namely, by removing vertices from a subpartition of some suitably chosen subclass of subpartitions minimizing the maximum $\phi$-value, to prove our following main result of an alternate max-min characterization of the Cheeger $k$-constant of weighted forests:
\begin{theorem}\label{thm:h_k=H_k}
For any weighted graph $\tilde G=(\tilde V,\tilde E,\tilde\mu,\tilde w)$ with $n$ vertices, for any $k\in\{1,2,\cdots,n\}$, we have \[h_k(\tilde G)\ge \ell_k(\tilde G)\ge \underline{\ell_k}(\tilde G).\] 
For any weighted forest $G=(V,E,\mu,w)$ with $n$ vertices, we must have \[h_k(G)= \ell_k(G)= \underline{\ell_k}(G).\]
\end{theorem}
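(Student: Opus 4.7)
The plan is to first prove the chain $h_k(\tilde G)\ge\ell_k(\tilde G)\ge\underline{\ell_k}(\tilde G)$ for a general weighted graph, and then, for a weighted forest, close the loop by establishing the reverse inequality $\underline{\ell_k}(G)\ge h_k(G)$. The step $\ell_k(\tilde G)\ge\underline{\ell_k}(\tilde G)$ is an essentially definitional unwrapping: given $A\subseteq\tilde V$ with $|A|=n-k+1$, the singleton subpartition $(\{v\})_{v\in A}\in\mathcal{P}_{n-k+1}(\tilde V)$ has union-closed family equal to $2^A\setminus\{\varnothing\}$, so its inner minimum is exactly $h(A)$, and maximizing over $A$ yields the claim. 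For $h_k(\tilde G)\ge\ell_k(\tilde G)$, I would prove a ``common element'' claim: for any $(A_1,\dots,A_k)\in\mathcal{P}_k(\tilde V)$ and $(C_1,\dots,C_{n-k+1})\in\mathcal{P}_{n-k+1}(\tilde V)$ there is a nonempty $B\in\mathcal{U}(A_1,\dots,A_k)\cap\mathcal{U}(C_1,\dots,C_{n-k+1})$; such a $B$ immediately gives $\max_{B'\in\mathcal{U}(A_i)}\phi(B')\ge\phi(B)\ge\min_{B'\in\mathcal{U}(C_j)}\phi(B')$, and taking extrema finishes the step. The existence of $B$ is a linear-dependence argument: the $k+(n-k+1)=n+1$ indicator vectors $\mathbf{1}_{A_i},\mathbf{1}_{C_j}$ must be dependent in $\mathbb{R}^{\tilde V}\cong\mathbb R^n$, and evaluating a nontrivial relation $\sum\alpha_i\mathbf{1}_{A_i}+\sum\beta_j\mathbf{1}_{C_j}=0$ at each vertex forces $\alpha_i+\beta_j=0$ on intersecting pairs $A_i\cap C_j\ne\varnothing$, together with $\alpha_i=0$ (resp.\ $\beta_j=0$) whenever $A_i\not\subseteq\bigcup_jC_j$ (resp.\ $C_j\not\subseteq\bigcup_iA_i$). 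Tracking signs then yields $\bigcup_{\alpha_i>0}A_i=\bigcup_{\beta_j<0}C_j$ (or its sign-flipped analogue), and nontriviality guarantees that one such choice is nonempty.

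For the forest direction $\underline{\ell_k}(G)\ge h_k(G)$, I would produce $A\subseteq V$ of size $n-k+1$ with $h(A)\ge h_k(G)$, equivalently a set of $k-1$ vertices whose deletion leaves every subset with expansion $\ge h_k(G)$. The natural object is $\mathcal{F}:=\{B\subseteq V : B\text{ is connected in }G\text{ and }\phi(B)<h_k(G)\}$, a family of subtrees of $G$. Two reductions frame the argument. First, since $\phi$ is a $\mu$-weighted average under disjoint unions, any $B$ with $\phi(B)<h_k(G)$ has a connected component already in $\mathcal{F}$, so keeping $A$ disjoint from every member of $\mathcal{F}$ suffices for $h(A)\ge h_k(G)$. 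Second, the maximum packing $\nu(\mathcal{F})$ of pairwise vertex-disjoint subtrees from $\mathcal{F}$ satisfies $\nu(\mathcal{F})\le k-1$: otherwise $k$ disjoint subtrees in $\mathcal{F}$ would form a subpartition in $\mathcal{P}_k(V)$ with $\max_i\phi<h_k(G)$, contradicting the definition of $h_k(G)$.

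The crux is the \textbf{K\"onig-type duality} for the subtree hypergraph of a forest: the minimum transversal number $\tau(\mathcal{F})$ equals the maximum packing $\nu(\mathcal{F})$. This is classical and follows from Helly's property for subtrees, which renders the subtree hypergraph balanced (indeed totally balanced) in Berge's sense, so K\"onig's identity extends to every subfamily. Granting it, I take a hitting set $U$ with $|U|\le k-1$, pad arbitrarily to $|U|=k-1$, and set $A:=V\setminus U$, so $|A|=n-k+1$. Every $B\subseteq A$ then has all its connected components avoiding $U$, hence outside $\mathcal{F}$, hence with $\phi\ge h_k(G)$; the weighted-average identity propagates this to $\phi(B)\ge h_k(G)$, yielding $h(A)\ge h_k(G)$ as required. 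The main obstacle is precisely this K\"onig step --- converting the minimax characterization of $h_k(G)$ into a combinatorial packing bound and then invoking the subtree-Helly / balanced-hypergraph machinery --- or, in the spirit of the paper's ``novel way'' remark, proving an equivalent direct statement by iteratively removing well-chosen vertices from an optimal subpartition; the sign-tracking step in the general-graph argument is delicate but amounts to careful bookkeeping once the right point of view is adopted.
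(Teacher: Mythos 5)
Your proposal is correct, but for the essential direction it travels a genuinely different road than the paper. The two easy inequalities match in substance: your singleton-subpartition observation for $\ell_k\ge\underline{\ell_k}$ is exactly the paper's Step 8, and your ``common element'' claim is the paper's Lemma \ref{lem:pigeonhole} --- though you prove it by linear dependence of the $n+1$ indicator vectors and sign-tracking, whereas the paper uses strong induction on $n$; both work, and yours is arguably the more transparent argument (just note that to convert $\max_{B\in\mathcal{U}(A_1,\dots,A_k)}\phi(B)\ge\phi(B^*)$ into a bound on $h_k=\min\max_{1\le i\le k}\phi(A_i)$ you still need the mediant inequality of Lemma \ref{lemma:basic-phi}, i.e.\ Proposition \ref{prop:unioninv}). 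The real divergence is in proving $\underline{\ell_k}(G)\ge h_k(G)$ for forests. The paper argues constructively: it takes a subpartition that lexicographically minimizes the maximum $\phi$-value, then the number of parts attaining it, then the total vertex weight; shows each part is connected; deletes one vertex from each of $k-1$ parts to make everything pairwise nonadjacent (possible precisely because $G$ is a tree); and verifies directly that the surviving $(n-k+1)$-set has Cheeger constant $h_k$. You instead package the problem as a transversal-versus-packing duality for the family $\mathcal{F}$ of connected sets with $\phi<h_k$, bound $\nu(\mathcal{F})\le k-1$ by the definition of $h_k$, and invoke the K\"onig property of subtree hypergraphs (Helly property plus chordality of the intersection graph plus perfection of chordal graphs gives $\tau=\nu$). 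This is sound --- $\tau(\mathcal F)=\nu(\mathcal F)$ for families of subtrees is classical, and your reductions (components of a low-expansion set, padding the hitting set, the mediant inequality to propagate $\phi\ge h_k$ from components to unions) are all valid --- and it is conceptually cleaner, at the price of importing perfect-graph machinery. What the paper's elementary route buys in exchange is reusability: its intermediate facts about optimal subpartitions (connectivity of the parts, $h(A_i\setminus\{v_i\})\ge h_k$, the count of deleted vertices) are exactly what gets perturbed to prove the loop-number generalization in Theorem \ref{thm:cycle-number-inequ}, where your duality would need the K\"onig property with an additive defect controlled by $\beta$. (One cosmetic caveat: subtree hypergraphs are hypertrees with the K\"onig property, but not all of them are totally balanced, so that parenthetical should be dropped or weakened.)
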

It is evident that the equality $h_k(G)= \ell_k(G)$ concerning forests in the aforementioned theorem is precisely the combinatorial Courant-Fischer-Weyl minimax principle we described in the introduction (see Theorem \ref{thm:CombinatorialCourant-Fischer}).

We present a generalization of {Theorem \ref{thm:h_k=H_k}} as follows.
\begin{theorem}\label{thm:cycle-number-inequ}
For any weighted graph $\tilde G=(\tilde V,\tilde E,\tilde\mu,\tilde w)$,  \[h_k(\tilde G)\ge \ell_k(\tilde G)\ge \underline{\ell_k}(\tilde G)\ge h_{k-\beta}(\tilde G),\] where $\beta:=|\tilde E|-|\tilde V|+c$ is the total number of independent loops of $\tilde G$, and $c$ is the number of connected components of $\tilde G$. 
\end{theorem}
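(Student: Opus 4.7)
The first two inequalities $h_k(\tilde G) \ge \ell_k(\tilde G) \ge \underline{\ell_k}(\tilde G)$ are already supplied by Theorem \ref{thm:h_k=H_k}, so the only fresh content is the tail inequality $\underline{\ell_k}(\tilde G) \ge h_{k-\beta}(\tilde G)$. The plan is to reduce to the forest case via a spanning-forest bridge, and then close the resulting index gap by iterating a one-edge lemma exactly $\beta$ times.

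For the bridge, I would fix a spanning forest $G$ of $\tilde G$ obtained by deleting $\beta$ cycle edges $e_1, \ldots, e_\beta$, which is possible by the very definition of $\beta$. Since edge removal only shrinks boundaries, $\phi_{\tilde G}(B) \ge \phi_G(B)$ for every nonempty $B \subseteq V$, hence $h_{\tilde G}(A) \ge h_G(A)$, and taking the max over $|A| = n-k+1$ gives $\underline{\ell_k}(\tilde G) \ge \underline{\ell_k}(G)$. Applying Theorem \ref{thm:h_k=H_k} to the forest $G$ yields $\underline{\ell_k}(G) = h_k(G)$. It therefore suffices to prove
\[h_{k-\beta}(\tilde G) \le h_k(G).\]

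The key tool I would establish is a one-edge lemma: for any weighted graph $H$ and any edge $e$ of $H$, $h_{k-1}(H) \le h_k(H - e)$. Start from an optimal $k$-subpartition $(A_1, \ldots, A_k)$ for $h_k(H - e)$ and analyze the endpoints $u, v$ of $e$. If at most one of $u, v$ lies in $\bigcup_i A_i$, then $e$ sits in at most one $H$-boundary $\partial_H A_i$, and dropping that single $A_i$ yields a $(k-1)$-subpartition whose $H$-expansions coincide with the corresponding $(H-e)$-expansions and are therefore all bounded by $h_k(H-e)$. If both endpoints lie inside the same $A_i$, then $e$ is internal and no $H$-expansion changes, so we may drop any subset. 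The decisive case is $u \in A_i$, $v \in A_j$ with $i \ne j$: here I would merge $A_i$ and $A_j$ into $A_i \cup A_j$, which makes $e$ internal so that $\phi_H(A_i \cup A_j) = \phi_{H-e}(A_i \cup A_j)$, and then apply the mediant inequality
\[\phi_{H-e}(A_i \cup A_j) \le \frac{w(\partial_{H-e} A_i) + w(\partial_{H-e} A_j)}{\mu(A_i) + \mu(A_j)} \le \max\{\phi_{H-e}(A_i),\, \phi_{H-e}(A_j)\} \le h_k(H-e).\]
In every case the resulting $(k-1)$-subpartition witnesses the lemma.

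Iterating the lemma $\beta$ times along the chain $\tilde G \supset \tilde G - e_1 \supset \cdots \supset \tilde G - e_1 - \cdots - e_\beta = G$ telescopes into $h_{k-\beta}(\tilde G) \le h_{k-\beta+1}(\tilde G - e_1) \le \cdots \le h_k(G)$, which combined with the spanning-forest bridge gives the desired $\underline{\ell_k}(\tilde G) \ge h_k(G) \ge h_{k-\beta}(\tilde G)$. I expect the main obstacle to be the merging case of the one-edge lemma: one cannot simply drop either $A_i$ or $A_j$, because whichever part remains still carries $e$ across its $H$-boundary and can thereby blow up its Cheeger value, so the mediant-inequality control after merging is essential. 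The other two cases reduce to routine bookkeeping.
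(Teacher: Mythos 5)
Your proposal is correct, but it takes a genuinely different route from the paper. The paper proves the tail inequality $\underline{\ell_k}(\tilde G)\ge h_{k-\beta}(\tilde G)$ by re-running the vertex-removal argument of Theorem \ref{thm:h_k=H_k} directly on $\tilde G$: in a graph with $\beta$ independent loops one needs to delete at most $k-1+\beta$ vertices (rather than $k-1$) to render the $k$ connected optimal parts pairwise nonadjacent, and then Lemma \ref{lem:cheeger-set-disjoint} yields $\underline{\ell_{k+\beta}}(\tilde G)\ge h_k(\tilde G)$, i.e.\ $\underline{\ell_k}(\tilde G)\ge h_{k-\beta}(\tilde G)$ after reindexing. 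You instead pass to a spanning forest $G=\tilde G-\{e_1,\dots,e_\beta\}$, use the monotonicity $\phi_{\tilde G}(B)\ge\phi_G(B)$ to get $\underline{\ell_k}(\tilde G)\ge\underline{\ell_k}(G)=h_k(G)$ from the forest case of Theorem \ref{thm:h_k=H_k}, and close the gap with the interlacing lemma $h_{k-1}(H)\le h_k(H-e)$, whose merging case is correctly handled by Lemma \ref{lemma:basic-phi}. Your route is more modular and, frankly, more completely written out: the paper's proof is a two-sentence sketch deferring to an argument that is itself only sketched for forests, whereas your one-edge lemma is a clean, self-contained statement of independent interest (an edge-deletion interlacing property of the Cheeger $k$-constants). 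What the paper's approach buys is that it works intrinsically on $\tilde G$ without invoking an auxiliary graph; what yours buys is a black-box reduction to the already-established forest identity plus an elementary local lemma. The only point worth flagging in either treatment is the degenerate regime $k\le\beta$, where $h_{k-\beta}$ needs a convention; this affects the paper's version equally and is not a gap specific to your argument.
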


%\section{Applications}
\subsection{Applications on min-max eigenvalues of Graph 1-Laplacian of Forests}\label{sec:minmax-eigen-1-Lap}
In the study of the $p$-Laplacian eigenvalue problem \cite{BH25,Burger,Drutu,Fazeny,HN25,Keller,LMR}, at least three sequences of eigenvalues have been introduced over the years. All of them are obtained by a \emph{minimax} procedure:
\begin{equation}\label{eq:continuous}
\lambda_k^{\mathrm{ind}}(\Delta_p):=\inf_{\substack{S\text{ origin-symmetric, compact}\\\mathrm{ind}(S)\ge k}}~\sup_{f\in S}\frac{\int_\Omega|\nabla f(z)|^pdz}{\int_\Omega|f(z)|^pdz}
\end{equation}
where $\mathrm{ind}(\cdot)$ is an admissible index for origin-symmetric compact subsets of the Sobolev space $H_0^1(\Omega)$. There are three admissible indexes commonly used for $p$-Laplacian:
\begin{itemize}
\item Krasnoselskii genus: The \emph{Krasnoselskii genus} of an origin-symmetric compact set $S$ is defined by $$\gamma(S):=\min\{k\in\mathbb{Z}^+:\exists\text{ odd continuous }\varphi:S\to \mathbb{R}^k\setminus\{0\}\}.$$ 
See \cite{Coffman} for details.
\item  Conner-Floyd index: The \emph{Conner-Floyd index} $\gamma^+$ of an origin-symmetric compact set $S$ is defined by $$\gamma^+(S):=\min\{k\in\mathbb{Z}^+:\exists\text{ odd continuous }\varphi:\mathbb{S}^{k-1}\to S\setminus\{0\}\},$$ 
where $\mathbb{S}^{k-1}$ stands for the unit sphere of $\mathbb{R}^k$. 
The min-max eigenvalues of $p$-Laplacian using $\mathrm{ind}=\gamma^+$ are called the Drabek-Robinson eigenvalues \cite{Drabek}.
\item Yang index: This index, denoted by $\textrm{Y-ind}(\cdot)$, is defined via homology information \cite{Yang}. We will not write down the definition explicitly, but interested readers may refer to \cite{Perera}.
\end{itemize}

The above three indices possess some common properties, which prompts us to introduce the following definition.

\begin{defn}[admissible index]
Let $\mathcal{S}$ be the set of all origin-symmetric compact subsets. 
An index $\mathrm{ind}:\mathcal{S}\to \mathbb{Z}_{\ge0}$ is \emph{admissible} if it satisfies the following properties:
\begin{itemize}
\item[(1)] %For any unit sphere of dimension 
$\mathrm{ind}(\mathbb{S}^{k-1})=k$, where $k=1,2,\cdots$
\item[(2)] monotonicity: If $A\subseteq A'$, then $$\mathrm{ind}(S)\le \mathrm{ind}(S').$$
\item[(3)] continuity: For any $S$, there exists a closed neighborhood $U$ of $S$ such that $$\mathrm{ind}(U)= \mathrm{ind}(S).$$
\item[(4)] nondecreasing under odd continuous map: For any odd continuous map $\eta$, $$\mathrm{ind}(S)\le  \mathrm{ind}(\eta(S)).$$
\end{itemize}
\end{defn}

By the standard approach in defining variational min-max eigenvalues of $\Delta_p$, we state that for every $k$, $\lambda_k^{\mathrm{ind}}(\Delta_p)$ defined with an admissible index must be an eigenvalue of $\Delta_p$ (see \cite{DTZ,Zhang}). 

It is known that $\lambda_1^{\gamma}(\Delta_p)= \lambda_1^{\gamma^+}(\Delta_p)$, $\lambda_2^{\gamma}(\Delta_p)= \lambda_2^{\gamma^+}(\Delta_p)$ and $\lambda_k^{\gamma}(\Delta_p)\le \lambda_k^{\gamma^+}(\Delta_p)$ for general $k$. However, the fundamental question of whether the general equality $\lambda_k^{\gamma}(\Delta_p)= \lambda_k^{\gamma^+}(\Delta_p)$ holds for every $k$ remains unresolved. For details, we refer to the open problem proposed by Luigi De Pascale in the 2013 Oberwolfach workshop \cite{Diening}.

In discrete settings, similar problems remain unsolved for the graph $p$-Laplacian. Let 
\begin{equation}\label{eq:discrete}
\lambda_k^{\mathrm{ind}}(\Delta_p):=\inf_{\substack{S\text{ origin-symmetric, compact}\\\mathrm{ind}(S)\ge k}}~\max_{x\in S}\frac{\sum\limits_{u\sim v}w_{uv}|x_u-x_v|^p}{\sum\limits_v\mu_v|x_v|^p} 
\end{equation}
be the min-max eigenvalue of $\Delta_p$ on a graph $G=(V,E,\mu,w)$, and let
\begin{equation}\label{eq:discrete/maxmin}
\underline{\lambda_k^{\mathrm{ind}}}(\Delta_p):=\sup_{\substack{S\text{ origin-symmetric, compact}\\\mathrm{ind}(S)\ge k}}~\min_{x\in S}\frac{\sum\limits_{u\sim v}w_{uv}|x_u-x_v|^p}{\sum\limits_v\mu_v|x_v|^p} 
\end{equation}
be the max-min eigenvalue of $\Delta_p$ on $G=(V,E,\mu,w)$ with $n$ vertices. 

As a direct corollary of Theorem \ref{thm:h_k=H_k}, we can give an affirmative answer to this question for 1-Laplacian of forests:

\begin{theorem}\label{thm:1-Lap=h}
Let $\Delta_1$ denote the 1-Laplacian of a forest $G$ with $n$ vertices. Then, for any admissible index, for any $k$, we have the equality
\begin{align*}
h_k(G)=\lambda_k^{\mathrm{ind}}(\Delta_1)=\underline{\lambda_k^{\mathrm{ind}}}(\Delta_1).%\sup_{S\text{ origin-symmetric, compact, }\mathrm{ind}(S)\ge n-k+1}\min_{x\in S}\frac{\sum\limits_{u\sim v}w_{uv}|x_u-x_v|}{\sum\limits_v\mu_v|x_v|}
\end{align*}
\end{theorem}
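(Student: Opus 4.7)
I would derive Theorem~\ref{thm:1-Lap=h} from three ingredients: (i) Theorem~\ref{thm:h_k=H_k}, identifying $h_k(G)$ with the Dirichlet Cheeger constant $\underline{\ell_k}(G)$ on forests; (ii) Theorem~\ref{thm:nonlinearCourant-Fischer}, realizing $h_k(G)$ as both an $L^1$-Rayleigh min-max and max-min over linear subspaces of $\mathbb{R}^n$; and (iii) a Borsuk--Ulam-type intersection lemma converting the abstract hypothesis $\mathrm{ind}(S)\ge k$ on a symmetric compact $S$ into a nontrivial intersection with a distinguished linear subspace. Throughout, write $R_1(x)=\bigl(\sum_{u\sim v}w_{uv}|x_u-x_v|\bigr)/\bigl(\sum_v\mu_v|x_v|\bigr)$ for the $L^1$-Rayleigh quotient.

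\textbf{Constructive inequalities.} Pick an optimal subpartition $(A_1,\dots,A_k)\in\mathcal{P}_k(V)$ realizing $h_k(G)$, and set $X=\mathrm{span}\{\mathbf{1}_{A_1},\dots,\mathbf{1}_{A_k}\}$, $S=X\cap\mathbb{S}^{n-1}$; axiom (1) of the admissible index forces $\mathrm{ind}(S)=k$. For $x=\sum_i t_i\mathbf{1}_{A_i}\in X$, an edgewise case analysis with $|t_i-t_j|\le|t_i|+|t_j|$ plus the disjointness of the $A_i$ yields
\[R_1(x)\;\le\;\frac{\sum_i|t_i|\,w(\partial A_i)}{\sum_i|t_i|\,\mu(A_i)}\;=\;\frac{\sum_i|t_i|\,\mu(A_i)\,\phi(A_i)}{\sum_i|t_i|\,\mu(A_i)}\;\le\;\max_i\phi(A_i)=h_k(G),\]
giving $\lambda_k^{\mathrm{ind}}(\Delta_1)\le h_k(G)$. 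Dually, Theorem~\ref{thm:h_k=H_k} provides $A^*\subseteq V$ with $|A^*|=n-k+1$ and $h(A^*)=h_k(G)$; take $X_{A^*}=\{x\in\mathbb{R}^n:\mathrm{supp}(x)\subseteq A^*\}$ and $S=X_{A^*}\cap\mathbb{S}^{n-1}$. Decomposing any $x\in X_{A^*}\setminus\{0\}$ as $x=x^+-x^-$ with disjoint supports, the standard $L^1$-coarea formula gives $R_1(x^\pm)\ge\min_{B\subseteq A^*}\phi(B)=h(A^*)$ whenever $x^\pm\ne 0$, and the mediant inequality then yields $R_1(x)\ge h_k(G)$, witnessing $\underline{\lambda_k^{\mathrm{ind}}}(\Delta_1)\ge h_k(G)$.

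\textbf{Topological completion.} Using Theorem~\ref{thm:nonlinearCourant-Fischer}, select linear subspaces $Y^+\subseteq\mathbb{R}^n$ of dimension $n-k+1$ with $R_1\ge h_k(G)$ on $Y^+\setminus\{0\}$, and $Y^-$ of dimension $k$ with $R_1\le h_k(G)$ on $Y^-\setminus\{0\}$. To prove $\lambda_k^{\mathrm{ind}}(\Delta_1)\ge h_k(G)$, take any admissible $S$ (symmetric compact, $0\notin S$, $\mathrm{ind}(S)\ge k$): if $S\cap Y^+=\{0\}$, orthogonal projection $\pi:S\to(Y^+)^\perp\setminus\{0\}$ is odd continuous; combined with the deformation retract of $(Y^+)^\perp\setminus\{0\}$ onto $\mathbb{S}^{k-2}$ (whose index equals $k-1$ by axiom (1)), admissibility axioms (2)--(4) then force $\mathrm{ind}(S)\le k-1$, a contradiction. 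Hence $S\cap Y^+\ne\{0\}$, and any nonzero $x\in S\cap Y^+$ satisfies $R_1(x)\ge h_k(G)$. The dual intersection with $Y^-$ (via projection to $(Y^-)^\perp$) yields $\underline{\lambda_k^{\mathrm{ind}}}(\Delta_1)\le h_k(G)$, closing all four inequalities.

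\textbf{Main obstacle.} The subtlest step is the topological completion: the Borsuk--Ulam intersection must work uniformly across every admissible index, using only the four abstract axioms. Axiom (4) (nondecreasing under odd continuous maps) is indispensable, and axiom (3) (closed-neighborhood continuity) is needed to handle projection images that fail to be closed in their ambient punctured subspace. A secondary challenge is the tight matching between the codimensions of $Y^\pm$ (supplied by the nonlinear CFW principle) and the cardinality of $A^*$ (supplied by the combinatorial Dirichlet Cheeger equality); this compatibility is precisely where the forest hypothesis becomes essential, through the equality $h_k(G)=\underline{\ell_k}(G)$ of Theorem~\ref{thm:h_k=H_k}.
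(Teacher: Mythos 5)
Your proposal is correct and follows essentially the same route as the paper: both eigenvalues are sandwiched between $h_k(G)$ and a max-min Cheeger quantity by using spans of indicator vectors of optimal (sub)partitions as test sets of the right index, a Borsuk--Ulam-type projection argument for admissible indexes (the paper's intersection Lemma \ref{lem:intersect}), and the $L^1$ coarea/positive-negative-part decomposition (the paper's selection Lemma \ref{lem:levelset}), after which the forest equality $h_k=\ell_k=\underline{\ell_k}$ of Theorem \ref{thm:h_k=H_k} closes the gap. The only cosmetic differences are that you use the coordinate subspace supported on a Dirichlet Cheeger set $A^*$ where the paper uses $\mathrm{span}(\mathbbm 1_{A_1},\dots,\mathbbm 1_{A_{n-k+1}})$ for an optimal $(n-k+1)$-subpartition, and that your appeal to Theorem \ref{thm:nonlinearCourant-Fischer} is unnecessary (and would be circular in the paper's logical ordering) since you already construct $Y^{\pm}$ explicitly in your second paragraph.
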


In particular, we have
\begin{cor}\label{cor:minmax-indepen-index}
Let $\Delta_1$ denote the 1-Laplacian of a forest. Then, for any $k$, 
$$ \lambda_k^{\gamma}(\Delta_1)=\lambda_k^{\gamma^+}(\Delta_1) =\lambda_k^{\textrm{Y-ind}}(\Delta_1).$$
\end{cor}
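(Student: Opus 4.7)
The plan is to derive this corollary as an immediate consequence of Theorem~\ref{thm:1-Lap=h}, after verifying that each of the three classical indices appearing in the statement is admissible in the sense of the definition given just before Theorem~\ref{thm:1-Lap=h}. The strategy reduces to two conceptually separate steps: first, an index-theoretic verification that $\gamma$, $\gamma^+$, and $\textrm{Y-ind}$ each satisfy the four properties $(1)$--$(4)$ of admissibility; second, an application of Theorem~\ref{thm:1-Lap=h} with $p=1$, which collapses all three min-max eigenvalues to the purely combinatorial quantity $h_k(G)$.

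For the first step I would check admissibility one index at a time. For $\gamma$, properties $(1)$--$(4)$ are the classical Krasnoselskii properties: $\gamma(\mathbb{S}^{k-1})=k$ follows from the Borsuk--Ulam theorem, and the monotonicity, continuity (via closed tubular neighborhoods), and equivariant naturality under odd continuous maps are standard, see \cite{Coffman}. For $\gamma^+$, property $(1)$ is immediate from the identity $\mathbb{S}^{k-1}\to\mathbb{S}^{k-1}$; monotonicity and the odd-map property follow directly from composition of odd maps, and the continuity property follows from the fact that a small closed neighborhood of $S$ deformation-retracts to $S$ in the equivariant category. For the Yang index, all four properties are part of its axiomatic characterization in \cite{Yang,Perera}. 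At this point I would invoke the fact, stated in the paragraph following the definition of admissible index, that $\lambda_k^{\mathrm{ind}}(\Delta_p)$ defined with any admissible index is a genuine eigenvalue of $\Delta_p$ (by \cite{DTZ,Zhang}), so the three sequences are a priori well-defined eigenvalue sequences.

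The second step is now direct. Having established that $\mathrm{ind}\in\{\gamma,\gamma^+,\textrm{Y-ind}\}$ are each admissible, Theorem~\ref{thm:1-Lap=h} applied with this choice of index yields
\[
\lambda_k^{\gamma}(\Delta_1)\;=\;h_k(G)\;=\;\lambda_k^{\gamma^+}(\Delta_1)\;=\;h_k(G)\;=\;\lambda_k^{\textrm{Y-ind}}(\Delta_1),
\]
so all three min-max eigenvalues coincide, as claimed. Note that this both fills in the middle equality and simultaneously identifies the common value with the Cheeger $k$-constant, which is the discrete--forest analogue of the open problem raised by De~Pascale \cite{Diening} in the continuous setting.

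The genuinely substantive work is hidden inside Theorem~\ref{thm:1-Lap=h}, not inside this corollary: indeed, the only thing left to check here is axiom verification, which is entirely standard for each of the three indices. Thus the only potential obstacle is a purely bookkeeping one, namely ensuring that the admissibility axioms as formulated in this paper (stated for origin-symmetric compact subsets of $\mathbb{R}^n$) match the conventions under which $\gamma$, $\gamma^+$, and $\textrm{Y-ind}$ were originally defined; once that identification is made, the corollary follows with no further computation.
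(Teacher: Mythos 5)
Your proposal is correct and follows exactly the paper's argument: the paper likewise notes that $\gamma$, $\gamma^+$, and $\textrm{Y-ind}$ are all admissible and then invokes Theorem~\ref{thm:1-Lap=h} to identify each $\lambda_k^{\mathrm{ind}}(\Delta_1)$ with $h_k(G)$. The only difference is that you spell out the (standard) verification of the admissibility axioms for each index, which the paper takes for granted.
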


%$$ \lambda_k^{\mathrm{ind}}(\Delta_1)= \min_{\dim X= k}\max_{x\in X\setminus\{0\}} \frac{\sum\limits_{u\sim v}w_{uv}|x_u-x_v|}{\sum\limits_v\mu_v|x_v|} = \max_{\dim X= n-k+1}\min_{x\in X\setminus\{0\}} \frac{\sum\limits_{u\sim v}w_{uv}|x_u-x_v|}{\sum\limits_v\mu_v|x_v|}.$$
\begin{cor}[%proposed in 
\cite{Deidda-thesis,Deidda}]\label{cor:tree-1-Lap-h}
For any forest $G$, we have $$\lambda_{k}^\gamma(\Delta_1)= h_k(G).$$
\end{cor}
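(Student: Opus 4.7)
The plan is to treat this as an immediate consequence of Theorem \ref{thm:1-Lap=h}, which already establishes $h_k(G)=\lambda_k^{\mathrm{ind}}(\Delta_1)$ for \emph{every} admissible index $\mathrm{ind}$ on a forest $G$. Thus the only substantive task is to confirm that the Krasnoselskii genus $\gamma$ fits into the axiomatic framework introduced earlier, i.e., that $\gamma$ satisfies the four admissibility properties in the definition of an admissible index.

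First I would verify these four axioms for $\gamma$, each of which is classical. Axiom (1), $\gamma(\mathbb{S}^{k-1})=k$, follows from the Borsuk--Ulam theorem (ruling out $\gamma(\mathbb{S}^{k-1})<k$) together with the identity map $\mathbb{S}^{k-1}\hookrightarrow\mathbb{R}^k\setminus\{0\}$ (providing $\gamma(\mathbb{S}^{k-1})\le k$). Axiom (2), monotonicity, is immediate: any odd continuous $\varphi:S'\to\mathbb{R}^k\setminus\{0\}$ restricts to an odd continuous map on every origin-symmetric $S\subseteq S'$. Axiom (3), continuity, is handled by an odd Tietze-type extension: given a map $\varphi:S\to\mathbb{R}^k\setminus\{0\}$ realizing $k=\gamma(S)$, one extends $\varphi$ oddly and continuously to an open symmetric neighborhood of $S$; by compactness of $S$ and openness of $\mathbb{R}^k\setminus\{0\}$, the extension remains nonvanishing on a small closed symmetric neighborhood $U$, giving $\gamma(U)\le k$, and monotonicity supplies the reverse inequality. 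Axiom (4), nondecreasingness under odd continuous maps, follows by composition: if $\eta:S\to \eta(S)$ is odd continuous and $\psi:\eta(S)\to\mathbb{R}^k\setminus\{0\}$ realizes $k=\gamma(\eta(S))$, then $\psi\circ\eta:S\to\mathbb{R}^k\setminus\{0\}$ is odd continuous, so $\gamma(S)\le k$.

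Once admissibility of $\gamma$ is in hand, applying Theorem \ref{thm:1-Lap=h} with $\mathrm{ind}=\gamma$ yields $\lambda_k^{\gamma}(\Delta_1)=h_k(G)$, which is precisely the claim. The only step with any subtlety is axiom (3), but that is a routine consequence of compactness together with openness of the target space, so there is essentially no genuine obstacle in this corollary beyond Theorem \ref{thm:1-Lap=h} itself.
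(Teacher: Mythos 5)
Your proposal is correct and matches the paper's proof, which likewise just instantiates Theorem \ref{thm:1-Lap=h} with $\mathrm{ind}=\gamma$. The verification of the four admissibility axioms for the Krasnoselskii genus is a welcome extra that the paper delegates to the cited literature, but it does not change the route.
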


%Corollary \ref{cor:tree-1-Lap-h} first appeared in \cite{Deidda}. 
From Theorem \ref{thm:1-Lap=h}, we realize that Theorem \ref{thm:h_k=H_k} actually induces a combinatorial proof of Corollary \ref{cor:tree-1-Lap-h}. 

\subsection{Applications on Refined Multi-way Cheeger Inequalities%Our main contribution
}\label{sec:refined-Cheeger}

In this paper, we prove Theorems \ref{thm:h_k=H_k} and 
\ref{thm:cycle-number-inequ}, and combining with the known properties presented in Section \ref{sec:tools}, we improve the multi-way Cheeger inequality for forests and on graphs with a small total number of independent loops. We shall omit the superscript $\gamma$ in $\lambda_{k}^\gamma(\Delta_p)$ and write it as $\lambda_{k}(\Delta_p)$ instead. This is because most of the literature on graph $p$-Laplacians adopts this notation \cite{Deidda,LMR,Tudisco}. Furthermore, we assume that the vertex weights and the edge weights satisfy $\mu_v=\sum_{u\in V:u\sim v}w_{uv}$. Such an assumption is commonly employed because of the advantage that the factors in Cheeger's inequality are independent of the choice of edge weights.

Together with Corollary \ref{cor:tree-1-Lap-h} and a monotonicity property of the eigenvalues of graph $p$-Laplacian (c.f. \cite{Zhang}), we are able to establish the following multi-way  Cheeger inequality:

\begin{cor}\label{cor:tree-p-Lap}
For any weighted forest $G$ satisfying $\mu_v=\sum_{u\in V:u\sim v}w_{uv}$, $\forall v\in V$, for any $p\ge 1$, we have
$$\frac{2^{p-1}}{p^p}h_k(G)^p\le \lambda_{k}(\Delta_p)\le 2^{p-1} h_k(G).$$
\end{cor}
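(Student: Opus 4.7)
The plan is to establish the upper and lower bounds separately, reducing each to the corresponding assertion for $\Delta_1$ via known general-graph inequalities, and then to invoke Corollary~\ref{cor:tree-1-Lap-h} to replace $\lambda_k(\Delta_1)$ by $h_k(G)$.

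For the upper bound $\lambda_{k}(\Delta_p) \le 2^{p-1}h_k(G)$, I would use the standard Cheeger-type construction, which is valid for every weighted graph and does not need the forest hypothesis. Given a subpartition $(A_1,\dots,A_k)$ realising $h_k(G)$, form the $k$-dimensional subspace $X = \mathrm{span}\{\mathbbm{1}_{A_1},\dots,\mathbbm{1}_{A_k}\}$, whose unit sphere has Krasnoselskii genus $k$. For any $x = \sum_i c_i \mathbbm{1}_{A_i} \in X$, the pointwise inequality $|a-b|^p \le 2^{p-1}(|a|^p+|b|^p)$ combined with the normalisation $\mu_v = \sum_{u\sim v} w_{uv}$ bounds the $p$-Rayleigh quotient by $2^{p-1}\max_i \phi(A_i) = 2^{p-1}h_k(G)$. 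Substituting into the variational formula for $\lambda_k(\Delta_p)$ gives the bound.

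For the lower bound $\frac{2^{p-1}}{p^p}h_k(G)^p \le \lambda_k(\Delta_p)$, I would invoke the monotonicity property of graph $p$-Laplacian variational eigenvalues established in \cite{Zhang}, which provides a general-graph comparison of the form
$$\lambda_k(\Delta_p) \ge \frac{2^{p-1}}{p^p}\,\lambda_k(\Delta_1)^p.$$
Morally, this is a uniform coarea / layer-cake argument applied at the level of genus-$k$ admissible families in the variational definition, rather than to a single eigenvector. Specialising to the forest $G$ and substituting $\lambda_k(\Delta_1) = h_k(G)$ from Corollary~\ref{cor:tree-1-Lap-h} yields the desired lower bound.

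The main obstacle is the lower bound. The classical Tudisco-Hein inequality $\frac{2^{p-1}}{p^p}h_m(G)^p \le \lambda_k(\Delta_p)$ only produces the index $m$ of strong nodal domains of an eigenvector, which may be strictly smaller than $k$, so $h_m(G)^p$ can be much smaller than $h_k(G)^p$. What rescues the argument is precisely the forest identity $\lambda_k(\Delta_1) = h_k(G)$ coming from the combinatorial Courant-Fischer-Weyl principle (Theorem~\ref{thm:CombinatorialCourant-Fischer}): composed with a monotonicity estimate that is insensitive to nodal domains, it restores the sharp index $k$ in the lower Cheeger bound. Verifying that the monotonicity in \cite{Zhang} is stated with respect to the Krasnoselskii-genus eigenvalues and yields exactly the constant $2^{p-1}/p^p$ is the only non-cosmetic point to check.
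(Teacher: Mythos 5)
Your proposal is correct, and the harder half (the lower bound) follows exactly the paper's route: the paper also deduces $\lambda_k(\Delta_p)\ge \frac{2^{p-1}}{p^p}\lambda_k(\Delta_1)^p$ from the increasing property of $p\mapsto p\,(2\lambda_k(\Delta_p))^{1/p}$ in Lemma \ref{lem:p-monotonicity} (evaluated against $p=1$) and then substitutes $\lambda_k(\Delta_1)=h_k(G)$ from Corollary \ref{cor:tree-1-Lap-h}; your ``non-cosmetic point to check'' does check out, and your diagnosis of why this evades the nodal-domain index $m$ of Tudisco--Hein is exactly the paper's point. Where you diverge is the upper bound: the paper gets $\lambda_k(\Delta_p)\le 2^{p-1}\lambda_k(\Delta_1)=2^{p-1}h_k(G)$ from the \emph{second} monotonicity statement in the same lemma (the map $p\mapsto 2^{-p}\lambda_k(\Delta_p)$ is decreasing), whereas you reprove it directly with the test subspace $\mathrm{span}\{\mathbbm 1_{A_1},\dots,\mathbbm 1_{A_k}\}$ and the inequality $|a-b|^p\le 2^{p-1}(|a|^p+|b|^p)$. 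Your version is self-contained, valid on arbitrary weighted graphs, and in fact does not use the normalisation $\mu_v=\sum_{u\sim v}w_{uv}$ at all (the mediant inequality alone bounds the quotient by $2^{p-1}\max_i\phi(A_i)$), so your phrase ``combined with the normalisation'' is superfluous there; the paper's version is shorter but leans entirely on the imported Lemma \ref{lem:p-monotonicity}, for which the normalisation \emph{is} a hypothesis. Either way the conclusion stands.
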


Corollary \ref{cor:tree-p-Lap} not only includes Corollary \ref{cor:tree-1-Lap-h} as a special case, but also refines the classical Cheeger inequality on trees by Miclo. 
We can rewrite the inequality in  {Corollary \ref{cor:tree-p-Lap}} as
$$
\swarrow\frac{1}{2^{p-1}}\lambda_k(\Delta_p)\le h_k(G)\le \frac p2 (2\lambda_k(\Delta_p))^{\frac1p}\nearrow.
$$
Then, by the increasing property of $\frac p2 (2\lambda_k(\Delta_p))^{\frac1p}$ with respect to $p\in[1,+\infty)$, and the decreasing property of $\frac{1}{2^{p-1}}\lambda_k(\Delta_p)$ with respect to $p\in[1,+\infty)$, the inequality becomes tight when $p$ tends to 1. 

Thanks to  {Theorem \ref{thm:cycle-number-inequ}}, {Corollary \ref{cor:tree-1-Lap-h}} can be generalized to apply to any graph with a small $\beta$.
\begin{theorem}\label{thm:beta}
For any weighted graph $G=(V,E,\mu,w)$, 
$$h_{k-\beta}(G) \le \lambda_{k}(\Delta_1)\le h_k(G),$$
where $\beta:=|E|-|V|+1$. %is the total number of independent loops of the graph. 
\end{theorem}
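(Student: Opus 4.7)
The plan is to prove the two inequalities of Theorem~\ref{thm:beta} separately, writing $R(x):=\frac{\sum_{u\sim v}w_{uv}|x_u-x_v|}{\sum_v\mu_v|x_v|}$ for the 1-Laplacian Rayleigh quotient, and to obtain the lower bound by combining a Dirichlet Cheeger identity with Theorem~\ref{thm:cycle-number-inequ}.

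\textbf{Upper bound $\lambda_k(\Delta_1)\le h_k(G)$.} Let $(A_1,\ldots,A_k)$ be a subpartition realizing $h_k(G)$, and form
\[
S := \Bigl\{\sum_{i=1}^{k} c_i\, \mathbbm{1}_{A_i} : (c_1,\ldots,c_k)\in\mathbb{R}^k,\ \sum_i |c_i|=1\Bigr\}.
\]
Then $S$ is the image of the $\ell^1$-unit sphere of $\mathbb{R}^k$ under an odd linear embedding, hence $\gamma(S)=k$. For $x=\sum_i c_i\mathbbm{1}_{A_i}\in S$, edges internal to a single $A_i$ contribute $0$ to the numerator of $R(x)$; edges from $A_i$ to $V\setminus\bigcup_j A_j$ contribute $w_{uv}|c_i|$; and edges connecting $A_i$ with $A_j$ contribute at most $w_{uv}(|c_i|+|c_j|)$. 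Summing gives $\sum_{u\sim v}w_{uv}|x_u-x_v|\le \sum_i |c_i|\,w(\partial A_i)$, while the denominator equals $\sum_i |c_i|\mu(A_i)$. The elementary convex-combination inequality then yields $R(x)\le \max_i \phi(A_i)=h_k(G)$, and inserting $S$ into the infimum defining $\lambda_k(\Delta_1)$ produces the desired upper bound.

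\textbf{Lower bound $\lambda_k(\Delta_1)\ge h_{k-\beta}(G)$.} I invoke the Dirichlet Cheeger identity for the 1-Laplacian: for every nonempty $A\subseteq V$,
\[
\lambda_1^D(A) := \inf_{\substack{x\neq 0\\ x_v=0\,\forall v\notin A}} R(x) \;=\; \min_{\varnothing\neq B\subseteq A}\phi(B) \;=\; h(A),
\]
whose standard coarea/layer-cake argument (reducing an optimal $x$ to an indicator $\mathbbm{1}_B$ with $B\subseteq A$) should reside among the tools of Section~\ref{sec:tools}. Fix any $A\subseteq V$ with $|A|=n-k+1$, set $X_A:=\{x\in\mathbb{R}^V:x_v=0\ \forall v\notin A\}$, and let $\pi:\mathbb{R}^V\to \mathbb{R}^{V\setminus A}\cong \mathbb{R}^{k-1}$ be the coordinate projection onto $V\setminus A$, which is odd and continuous. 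For any origin-symmetric compact $S\subset\mathbb{R}^V\setminus\{0\}$ with $\gamma(S)\ge k$, the defining property of Krasnoselskii genus forbids an odd continuous map $S\to\mathbb{R}^{k-1}\setminus\{0\}$, so $\pi$ must vanish at some $x_0\in S$, giving a nonzero element of $S\cap X_A$. Hence
\[
\max_{x\in S}R(x)\;\ge\; R(x_0)\;\ge\; \lambda_1^D(A)\;=\;h(A).
\]
Taking the infimum over admissible $S$ yields $\lambda_k(\Delta_1)\ge h(A)$, and maximizing over $A$ with $|A|=n-k+1$ gives $\lambda_k(\Delta_1)\ge \underline{\ell_k}(G)$. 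Combined with the inequality $\underline{\ell_k}(G)\ge h_{k-\beta}(G)$ from Theorem~\ref{thm:cycle-number-inequ}, the lower bound follows.

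\textbf{Main obstacle.} The delicate ingredient is the Dirichlet identity $\lambda_1^D(A)=h(A)$ in the weighted form used here (with the normalization $\mu_v=\sum_{u\sim v}w_{uv}$ and with $A$ possibly disconnected); once it is in hand, the odd-map/genus intersection step is automatic and Theorem~\ref{thm:cycle-number-inequ} does the rest. The upper bound is essentially a one-paragraph test-function calculation and contains no surprises.
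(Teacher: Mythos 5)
Your argument is correct, and it assembles essentially the same ingredients as the paper, which proves Theorem~\ref{thm:beta} by chaining Lemma~\ref{lem:general-graph-delta} ($h_k\ge\lambda_k(\Delta_1)\ge\ell_k$), Theorem~\ref{thm:h_k=H_k} ($\ell_k\ge\underline{\ell_k}$) and Theorem~\ref{thm:cycle-number-inequ} ($\underline{\ell_k}\ge h_{k-\beta}$). Two of your steps are genuine (harmless) variations. For the upper bound you test on the $\ell^1$-sphere of $\mathrm{span}(\mathbbm 1_{A_1},\dots,\mathbbm 1_{A_k})$ and estimate edge by edge; the paper instead reduces $\Phi_1$ on that span to $\phi$-values of unions of the $A_i$ via the layer-cake identity and Lemma~\ref{lemma:basic-phi}. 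Both are one-paragraph computations. For the lower bound you bypass $\ell_k$ entirely and go straight to $\underline{\ell_k}$ by intersecting $S$ with the coordinate subspace $X_A$; the paper intersects with $\mathrm{span}(\mathbbm 1_{A_1},\dots,\mathbbm 1_{A_{n-k+1}})$ for an optimal subpartition and lands on $\ell_k$ first. Your route is slightly more direct and loses nothing, since $\ell_k\ge\underline{\ell_k}$ anyway. The ``delicate ingredient'' you flag, the Dirichlet identity $\lambda_1^D(A)=h(A)$, is not actually missing from the paper's toolkit: it is exactly Lemma~\ref{lem:levelset} (the selection lemma) applied to the singleton subpartition $(\{v\})_{v\in A}$, whose union-closed family is all nonempty subsets of $A$, combined with the trivial $\le$ from indicator test vectors; note it needs no connectedness of $A$ and no normalization $\mu_v=\sum_{u\sim v}w_{uv}$ (that hypothesis is only used for the $p$-monotonicity in Lemma~\ref{lem:p-monotonicity}). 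Your odd-projection step is the genus-specific case of the paper's Lemma~\ref{lem:intersect}. One small caveat you inherit from the paper rather than introduce: Theorem~\ref{thm:cycle-number-inequ} gives $\underline{\ell_k}\ge h_{k-\beta}$ with $\beta=|E|-|V|+c$, whereas Theorem~\ref{thm:beta} states $\beta=|E|-|V|+1$; these agree only for connected $G$, and the discrepancy is present in the paper's own citation chain as well.
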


Similarly, we can generalize {Corollary \ref{cor:tree-p-Lap}} to the following formulation:
\begin{theorem}\label{thm:beta-p}
For any weighted graph $G=(V,E,\mu,w)$ satisfying $\mu_v=\sum_{u\in V:u\sim v}w_{uv}$, $\forall v\in V$, for any $p\ge 1$, we have 
$$\frac{2^{p-1}}{p^p}h_{k-\beta}(G)^p\le \lambda_{k}(\Delta_p)\le 2^{p-1} h_k(G).$$
\end{theorem}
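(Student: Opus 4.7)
The plan is to obtain Theorem \ref{thm:beta-p} as a direct consequence of Theorem \ref{thm:beta} combined with the known monotonicity behavior of the variational $p$-Laplacian eigenvalues as $p$ varies, in the same style that the authors already indicated for Corollary \ref{cor:tree-p-Lap} (the $\beta=0$ case). In particular, the proof should not require any new combinatorial input beyond what Theorem \ref{thm:cycle-number-inequ} (specialized to $\beta=|E|-|V|+1$ for connected $G$, or its component-wise version) has already provided.

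First I would invoke the two standard monotonicity facts for $\lambda_k(\Delta_p)$ in $p\in[1,\infty)$, which are already alluded to in Section \ref{sec:refined-Cheeger} and are proved for graph $p$-Laplacians in \cite{Zhang}: the quantity $\frac{\lambda_k(\Delta_p)}{2^{p-1}}$ is non-increasing in $p$, and the quantity $\frac{p}{2}(2\lambda_k(\Delta_p))^{1/p}$ is non-decreasing in $p$. Setting the first at $p$ against its value at $p=1$ gives
\[
\lambda_k(\Delta_p)\le 2^{p-1}\lambda_k(\Delta_1),
\]
and setting the second at $p$ against its value at $p=1$ gives, after rearranging,
\[
\lambda_k(\Delta_p)\ge \frac{2^{p-1}}{p^p}\lambda_k(\Delta_1)^p.
\]

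Next I would plug in Theorem \ref{thm:beta}, namely $h_{k-\beta}(G)\le \lambda_k(\Delta_1)\le h_k(G)$. Combining this with the two displays above yields directly
\[
\frac{2^{p-1}}{p^p}h_{k-\beta}(G)^p\le \frac{2^{p-1}}{p^p}\lambda_k(\Delta_1)^p \le \lambda_k(\Delta_p)\le 2^{p-1}\lambda_k(\Delta_1)\le 2^{p-1}h_k(G),
\]
which is exactly the claimed two-sided inequality.

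I do not expect a real obstacle here: the combinatorial core (bounding $\lambda_k(\Delta_1)$ by $h_{k-\beta}$ and $h_k$ via the max-min/Dirichlet Cheeger constants $\ell_k(G)$ and $\underline{\ell_k}(G)$) is already carried out in Theorem \ref{thm:cycle-number-inequ} and Theorem \ref{thm:beta}. The only point to double-check is that the cited monotonicity of $\lambda_k(\Delta_p)$ in $p$ holds for the exact vertex-weight normalization $\mu_v=\sum_{u\sim v}w_{uv}$ hypothesized here; this is precisely the setting in which the factor $2^{p-1}$ appears naturally (from $|a-b|^p\le 2^{p-1}(|a|^p+|b|^p)$) and for which the monotonicity in \cite{Zhang} is stated, so nothing further is needed.
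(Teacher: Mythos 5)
Your proposal is correct and follows essentially the same route as the paper: the authors likewise combine Theorem \ref{thm:beta} with the two monotonicity statements of Lemma \ref{lem:p-monotonicity} (that $2^{-p}\lambda_k(\Delta_p)$ is decreasing and $p(2\lambda_k(\Delta_p))^{1/p}$ is increasing in $p$), evaluated at $p$ versus $p=1$, to get $\frac{2^{p-1}}{p^p}h_{k-\beta}(G)^p\le\lambda_k(\Delta_p)\le 2^{p-1}h_k(G)$. The constant-factor differences in how you normalize the monotone quantities are immaterial, so nothing further is needed.
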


Note that the subscript $k-\beta$ of the term $h_{k-\beta}$ does not depend on the nodal domain counts, and the factor $\frac{2^{p-1}}{p^p}$ in the lower bound is independent of both $k$ and graphs. This improves the main theorem in \cite{Tudisco}. 

A \emph{unicyclic graph} is a graph that has exactly one circuit. 
\begin{cor}\label{cor:unicyclic}
For any weighted unicyclic graph $G=(V,E,\mu,w)$ satisfying $\mu_v=\sum_{u\in V:u\sim v}w_{uv}$, $\forall v\in V$, we have $\frac{2^{p-1}}{p^p}h_{k-1}(G)\le \lambda_{k}(\Delta_p)\le 2^{p-1} h_k(G)$.   
\end{cor}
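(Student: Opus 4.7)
The plan is to derive Corollary \ref{cor:unicyclic} as an immediate specialization of Theorem \ref{thm:beta-p}; the only genuine step is to verify that a unicyclic graph has loop parameter $\beta = 1$. Once that is done, setting $\beta = 1$ in Theorem \ref{thm:beta-p} yields the claimed inequality (with the lower-bound term being $h_{k-1}(G)^p$, as produced by Theorem \ref{thm:beta-p}).

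To compute $\beta$, I would work with the formula $\beta = |E| - |V| + c$ from Theorem \ref{thm:cycle-number-inequ}, where $c$ is the number of connected components of $G$. By the paper's definition, a unicyclic graph has exactly one circuit, so exactly one of its $c$ components carries the cycle, while each of the remaining $c - 1$ components is a tree. A tree on $m$ vertices has $m - 1$ edges, and a connected graph on $m$ vertices with exactly one cycle has $m$ edges; summing over components yields $|E| = |V| - (c - 1)$. Substituting gives $\beta = |E| - |V| + c = 1$. In the connected case this reduces to the $|E| - |V| + 1 = 1$ version of $\beta$ appearing in Theorem \ref{thm:beta}.

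With $\beta = 1$ in hand, Theorem \ref{thm:beta-p} immediately gives
\[
\frac{2^{p-1}}{p^p}\,h_{k-1}(G)^p \;\le\; \lambda_k(\Delta_p) \;\le\; 2^{p-1}\,h_k(G),
\]
which is exactly the statement of Corollary \ref{cor:unicyclic}. There is essentially no obstacle at this stage: the real work has already been done upstream, in the combinatorial minimax principle (Theorem \ref{thm:h_k=H_k}), in the loop-dependent strengthening (Theorem \ref{thm:cycle-number-inequ}), and in the monotonicity of $p$-Laplacian variational eigenvalues that bootstraps the $p = 1$ identity to general $p \ge 1$. The interest of the statement lies not in the proof but in its form: the shift from $k$ to $k - 1$ is a purely global, graph-theoretic correction (the cyclomatic number of a unicyclic graph), independent of any eigenfunction's nodal-domain count, and the prefactor $\frac{2^{p-1}}{p^p}$ depends on neither $k$ nor $G$, thereby refining the Tudisco--Hein bound on this class of graphs.
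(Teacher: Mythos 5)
Your proposal is correct and follows exactly the paper's route: verify that unicyclic means $\beta=1$ and then specialize Theorem \ref{thm:beta-p}. Your parenthetical observation that the lower bound should read $h_{k-1}(G)^p$ rather than $h_{k-1}(G)$ is also right --- the exponent $p$ is evidently a typo in the corollary as stated, since Theorem \ref{thm:beta-p} produces $\frac{2^{p-1}}{p^p}h_{k-\beta}(G)^p$.
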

%Particularly, for any unicyclic graph,  $\beta=1$ and the inequality $\frac{2^{p-1}}{p^p}h_{k-1}(G)\le \lambda_{k}(\Delta_p)\le 2^{p-1} h_k(G)$ is actually sharp.

\section{Proofs}
This section is devoted to providing detailed proofs of all the theorems presented in this paper. Figure \ref{fig:placeholder} describes the logic flow between our results, rectangles indicate combinatorial results, ellipses indicate analytical results, white indicates auxiliary and less important results, grey indicates main results.

\tikzstyle{startstop} = [ellipse, rounded corners, minimum width=1cm, minimum height=1cm,text centered, draw=black, fill=white]
\tikzstyle{startstop2} = [ellipse, minimum width=1cm, minimum height=1cm,text centered, draw=black, fill=gray!10]
\tikzstyle{startstop3} = [rectangle, minimum width=1cm, minimum height=1cm,text centered, draw=black, fill=white]
\tikzstyle{startstop4} = [rectangle, minimum width=1cm, minimum height=1cm,text centered, draw=black, fill=gray!10]

\begin{figure}
    \centering
\begin{tikzpicture}[scale=1,node distance=3.9cm]
\node(L1)[startstop3]{Lemma \ref{lemma:basic-phi} };
\node(L2)[startstop3,below of=L1,yshift=1.5cm,xshift=-3.5cm]{Lemma \ref{lem:cheeger-set-disjoint}};  
\node(L3)[startstop3,below of=L1,yshift=2.6cm,xshift=0cm]{Lemma \ref{lem:pigeonhole}};  
\node(P1)[startstop3,below of=L1,yshift=-0.5cm,xshift=0cm]{Proposition \ref{prop:unioninv}};
\node(T3)[startstop4,right of=L2,yshift=0cm,xshift=3cm]{Theorem \ref{thm:h_k=H_k}};
\node(T4)[startstop4,below of=T3,yshift=-8cm,xshift=0cm]{Theorem \ref{thm:cycle-number-inequ}};
\node(T6)[startstop2,right of=T4,yshift=4cm,xshift=0cm]{Theorem \ref{thm:beta}};
\node(T1)[startstop4,below of=T3,yshift=-1.5cm,xshift=-1.5cm]{Theorem \ref{thm:CombinatorialCourant-Fischer}};
\node(T5)[startstop2,right of=T3,yshift=6cm,xshift=-3.8cm]{Theorem \ref{thm:1-Lap=h}};
\node(C2)[startstop,above of=T5,yshift=-2cm,xshift=4cm]{Corollary \ref{cor:tree-1-Lap-h}};
\node(C1)[startstop,left of=T5,yshift=0cm,xshift=0cm]{Corollary \ref{cor:minmax-indepen-index}};
\node(T2)[startstop2,right of=T3,yshift=2cm,xshift=-2cm]{Theorem \ref{thm:nonlinearCourant-Fischer}};
\node(L4)[startstop,above of=T5,yshift=-8cm,xshift=8cm]{Lemma \ref{lem:p-monotonicity}};
\node(C3)[startstop,above of=L4,yshift=0cm,xshift=0cm]{Corollary \ref{cor:tree-p-Lap}};
\node(L7)[startstop,below of=T5,yshift=2cm,xshift=3.5cm]{Lemma \ref{lem:general-graph-delta}};
\node(L6)[startstop,below of=L7,yshift=0cm,xshift=2cm]{Lemma \ref{lem:levelset}};
\node(L5)[startstop,above of=L7,yshift=-2cm,xshift=0cm]{Lemma \ref{lem:intersect}};
\node(T7)[startstop2,below of=L4,yshift=0cm,xshift=0cm]{Theorem \ref{thm:beta-p}};
\node(C4)[startstop,below of=T7,yshift=1cm,xshift=0cm]{Corollary \ref{cor:unicyclic}};
\draw[decoration={markings,mark=at position 1 with{\arrow[scale=2,>=stealth]{>}}},postaction={decorate}](P1) to%[out=140,in=40]
node[anchor=south]{ } (T3);
\draw[decoration={markings,mark=at position 1 with{\arrow[scale=2,>=stealth]{>}}},postaction={decorate}](L1) to%[out=165,in=275]  
node[anchor=north] { }(T3);
\draw[decoration={markings,mark=at position 1 with{\arrow[scale=2,>=stealth]{>}}},postaction={decorate}](L2) to%[out=165,in=275]  
node[anchor=north] { }(T3);
\draw[decoration={markings,mark=at position 1 with{\arrow[scale=2,>=stealth]{>}}},postaction={decorate}](L2) to%[out=165,in=275]  
node[anchor=north] { }(P1);
\draw[decoration={markings,mark=at position 1 with{\arrow[scale=2,>=stealth]{>}}},postaction={decorate}](L3) to%[out=165,in=275]  
node[anchor=north] { }(T3);
\draw[decoration={markings,mark=at position 1 with{\arrow[scale=2,>=stealth]{>}}},postaction={decorate}](L1) to%[out=210,in=75]  
node[anchor=north] { }(L2);
\draw[decoration={markings,mark=at position 1 with{\arrow[scale=2,>=stealth]{>}}},postaction={decorate}](T3) to%[out=235,in=45] 
node[anchor=north]{ } (T5);
\draw[decoration={markings,mark=at position 1 with{\arrow[scale=2,>=stealth]{>}}},postaction={decorate}](T3) to%[out=235,in=45] 
node[anchor=north]{ } (T4);
\draw[decoration={markings,mark=at position 1 with{\arrow[scale=2,>=stealth]{>}}},postaction={decorate}](T3) to%[out=235,in=45] 
node[anchor=north]{ } (T1);
\draw[decoration={markings,mark=at position 1 with{\arrow[scale=2,>=stealth]{>}}},postaction={decorate}](T4)to node[anchor=north]{ }(T6);
\draw[decoration={markings,mark=at position 1 with{\arrow[scale=2,>=stealth]{>}}},postaction={decorate}](T3) to node[anchor=north]{ }(T6);
\draw[decoration={markings,mark=at position 1 with{\arrow[scale=2,>=stealth]{>}}},postaction={decorate}](L7) to node[anchor=north]{ }(T6);
\draw[decoration={markings,mark=at position 1 with{\arrow[scale=2,>=stealth]{>}}},postaction={decorate}](L7) to node[anchor=north]{ }(T2);
\draw[decoration={markings,mark=at position 1 with{\arrow[scale=2,>=stealth]{>}}},postaction={decorate}](T3) to node[anchor=north]{ }(T2);
\draw[decoration={markings,mark=at position 1 with{\arrow[scale=2,>=stealth]{>}}},postaction={decorate}](T6) to node[anchor=north]{ }(T7);
\draw[decoration={markings,mark=at position 1 with{\arrow[scale=2,>=stealth]{>}}},postaction={decorate}](L4) to node[anchor=north]{ }(T7);
\draw[decoration={markings,mark=at position 1 with{\arrow[scale=2,>=stealth]{>}}},postaction={decorate}](L7) to node[anchor=north]{ }(T5);
\draw[decoration={markings,mark=at position 1 with{\arrow[scale=2,>=stealth]{>}}},postaction={decorate}](L5) to node[anchor=north]{ }(L7);
\draw[decoration={markings,mark=at position 1 with{\arrow[scale=2,>=stealth]{>}}},postaction={decorate}](L6) to node[anchor=north]{ }(L7);
\draw[decoration={markings,mark=at position 1 with{\arrow[scale=2,>=stealth]{>}}},postaction={decorate}](P1) to node[anchor=north]{ }(T1);
\draw[decoration={markings,mark=at position 1 with{\arrow[scale=2,>=stealth]{>}}},postaction={decorate}](T5) to node[anchor=north]{ }(C1);
\draw[decoration={markings,mark=at position 1 with{\arrow[scale=2,>=stealth]{>}}},postaction={decorate}](T5) to node[anchor=north]{ }(C2);
\draw[decoration={markings,mark=at position 1 with{\arrow[scale=2,>=stealth]{>}}},postaction={decorate}](C2) to node[anchor=north]{ }(C3);
\draw[decoration={markings,mark=at position 1 with{\arrow[scale=2,>=stealth]{>}}},postaction={decorate}](L4) to node[anchor=north]{ }(C3);
\draw[decoration={markings,mark=at position 1 with{\arrow[scale=2,>=stealth]{>}}},postaction={decorate}](L2) to node[anchor=north]{ }(T4);
\draw[decoration={markings,mark=at position 1 with{\arrow[scale=2,>=stealth]{>}}},postaction={decorate}](T7) to node[anchor=north]{ }(C4);
\end{tikzpicture}
    \caption{Logic Flow}
    \label{fig:placeholder}
\end{figure}
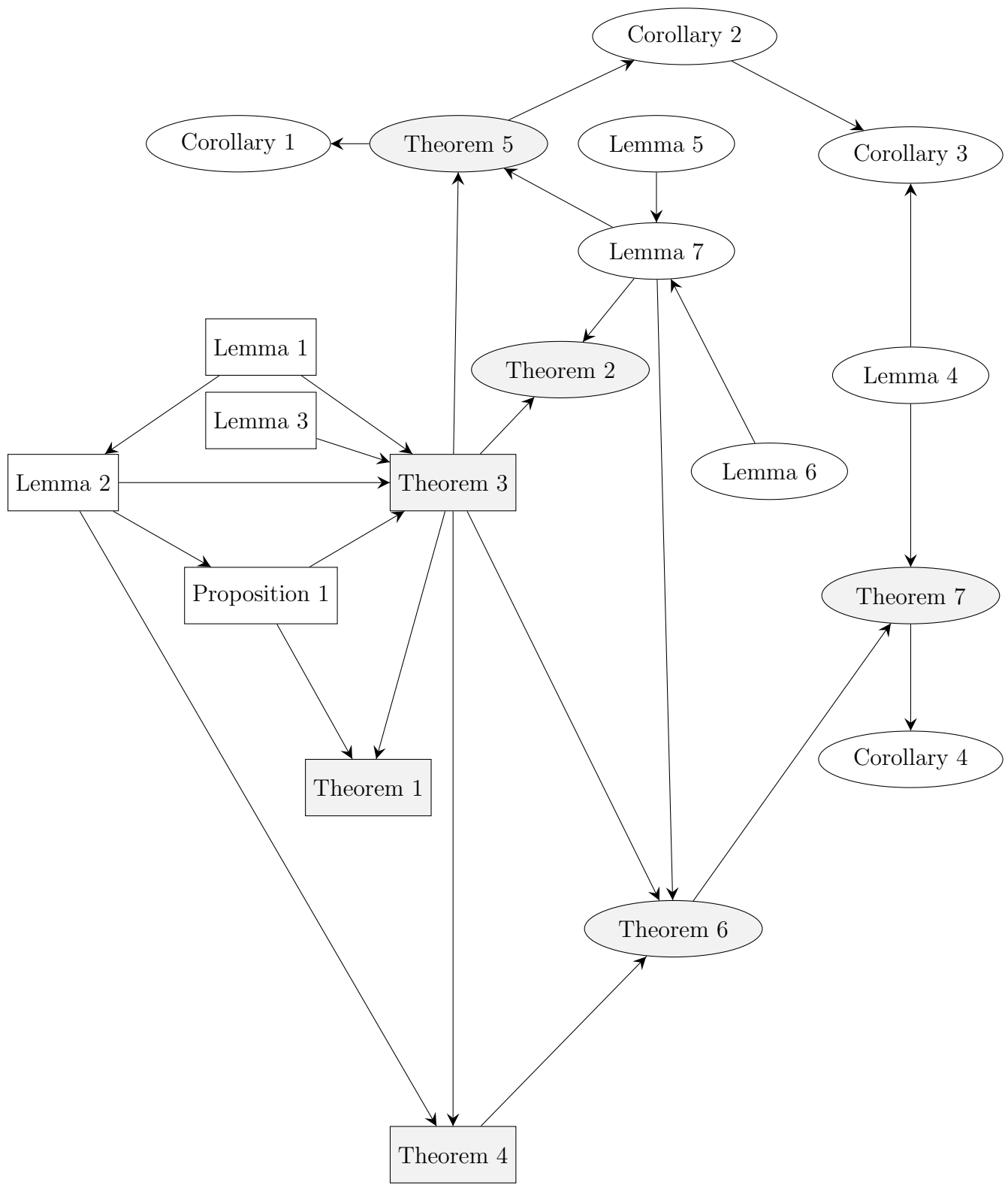

\subsection{Proofs of the Main Theorems (Combinatorial)}
For any two nonempty disjoint subsets $A$ and $B$ of $V$, define \[E(A,B):=\{\{u,v\}\in E:u\in A,v\in B\}\] to be the set of edges with one vertex in $A$ and the other in $B$. Two nonempty disjoint subsets $A$ and $B$ are said to be \emph{nonadjacent} if $E(A,B)=\varnothing$.

\begin{lemma}\label{lemma:basic-phi}
For any disjoint subsets $A$ and $B$ of $V$, we have $\phi(A\cup B)\le \max\left\{\phi(A),\phi(B)\right\}$. If we further assume that $E(A,B)=\varnothing$, then we have $$\min\left\{\phi(A),\phi(B)\right\} \le \phi(A\cup B)\le \max\left\{\phi(A),\phi(B)\right\}.$$
\end{lemma}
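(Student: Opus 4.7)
The plan is to reduce everything to a direct computation of $w(\partial(A\cup B))$ and $\mu(A\cup B)$, and then finish via the standard mediant inequality $\min\{a/c,b/d\}\le (a+b)/(c+d)\le \max\{a/c,b/d\}$ for positive reals. The key combinatorial observation is that every edge contributing to $\partial A$ either leaves $A\cup B$ (and thus contributes to $\partial(A\cup B)$) or goes into $B$ (and thus belongs to $E(A,B)$), and symmetrically for $\partial B$. Each edge of $E(A,B)$ is counted once in $\partial A$ and once in $\partial B$, but never in $\partial(A\cup B)$.

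Concretely, I would first write the edge-set decompositions
\begin{align*}
\partial A &= E(A,V\setminus(A\cup B))\sqcup E(A,B),\\
\partial B &= E(B,V\setminus(A\cup B))\sqcup E(A,B),\\
\partial(A\cup B) &= E(A,V\setminus(A\cup B))\sqcup E(B,V\setminus(A\cup B)),
\end{align*}
and read off the weight identity $w(\partial(A\cup B)) = w(\partial A) + w(\partial B) - 2w(E(A,B))$. Combined with the disjoint-union identity $\mu(A\cup B)=\mu(A)+\mu(B)$, I obtain
\[
\phi(A\cup B) \;=\; \frac{w(\partial A)+w(\partial B)-2w(E(A,B))}{\mu(A)+\mu(B)} \;\le\; \frac{w(\partial A)+w(\partial B)}{\mu(A)+\mu(B)}.
\]

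For the upper bound in full generality, I would apply the mediant inequality to the right-hand side above with $a=w(\partial A)$, $c=\mu(A)$, $b=w(\partial B)$, $d=\mu(B)$, yielding $\phi(A\cup B)\le\max\{\phi(A),\phi(B)\}$. For the lower bound under the nonadjacency hypothesis $E(A,B)=\varnothing$, the subtracted term $2w(E(A,B))$ vanishes, so $\phi(A\cup B)$ equals the exact mediant of $\phi(A)$ and $\phi(B)$, which is bounded below by $\min\{\phi(A),\phi(B)\}$ by the same inequality.

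There is no real obstacle here; the only thing to be a bit careful about is that the mediant inequality requires positive denominators, which is guaranteed by $\mu>0$ on nonempty $A,B$, and that the subtraction $-2w(E(A,B))$ only helps for the upper bound (making the inequality still valid without the nonadjacency hypothesis) while being precisely what prevents the lower bound from holding in general. This dichotomy is what motivates stating the lemma in two parts.
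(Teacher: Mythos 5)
Your proof is correct and follows essentially the same route as the paper: bound (or compute exactly) $w(\partial(A\cup B))$ in terms of $w(\partial A)+w(\partial B)$, use $\mu(A\cup B)=\mu(A)+\mu(B)$, and conclude via the mediant inequality. The only cosmetic difference is that you make the boundary decomposition and the identity $w(\partial(A\cup B))=w(\partial A)+w(\partial B)-2w(E(A,B))$ explicit, whereas the paper just uses the inequality $w(\partial(A\cup B))\le w(\partial A)+w(\partial B)$ and proves the mediant bound by a short contradiction argument.
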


%\subsubsection{Proof of Lemma \ref{lemma:basic-phi}}
\begin{proof}[Proof of Lemma \ref{lemma:basic-phi}]
Let $a=w(\partial A)$, $b=\mu(A)$, $c=w(\partial B)$, $d=\mu(B)$. Then \[w(\partial(A\cup B))\le a+c\text{ and }\mu(A\cup B)=b+d,\] so $\phi(A\cup B)\le\frac{a+c}{b+d}$. Now suppose for the sake of contradiction that \[\frac ab<\frac{a+c}{b+d}\text{ and }\frac cd<\frac{a+c}{b+d},\] then summing up yields \[\frac{ad+bc}{bd}<\frac{a+c}{b+d}\iff ad^2+b^2c<0,\] which is a contradiction, so it follows that $\phi(A\cup B)\le\max\{\phi(A),\phi(B)\}$. If in addition $E(A,B)=\varnothing$, then $w(\partial(A\cup B))=a+c$ and $\phi(A\cup B)=\frac{a+c}{b+d}$, so the remaining inequality follows similarly.
\end{proof}

The next statement is a consequence of the preceding lemma.

\begin{lemma}\label{lem:cheeger-set-disjoint}
For any subpartition $(B_1,\cdots,B_k)$ of $V$, we have \[h(B_1\cup\cdots\cup B_k)\le \min\{h(B_1),\cdots,h(B_k)\}.\] If we further assume that $E(B_i,B_j)=\varnothing$ whenever $i\ne j$, then we have \[h(B_1\cup\cdots\cup B_k)=\min\{h(B_1),\cdots,h(B_k)\}.\]
\end{lemma}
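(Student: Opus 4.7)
The plan is to prove the two assertions separately, exploiting that $h$ is defined as a minimum of $\phi$ over subsets so that enlarging the ambient set is monotone, and then using the nonadjacent version of Lemma \ref{lemma:basic-phi} to pass from pieces to unions in the equality case.

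For the first (unconditional) inequality, I would simply observe that $h$ is nonincreasing under set inclusion of the ambient domain: for each $i$, any $C\subseteq B_i$ also satisfies $C\subseteq B_1\cup\cdots\cup B_k$, so
\[
h(B_1\cup\cdots\cup B_k)=\min_{C\subseteq B_1\cup\cdots\cup B_k}\phi(C)\le\min_{C\subseteq B_i}\phi(C)=h(B_i).
\]
Taking the minimum over $i\in\{1,\dots,k\}$ yields $h(B_1\cup\cdots\cup B_k)\le\min_i h(B_i)$. No use of the nonadjacency hypothesis is needed here.

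For the equality statement, only the reverse inequality remains. Fix an arbitrary subset $C\subseteq B_1\cup\cdots\cup B_k$ and decompose it as $C=C_1\sqcup\cdots\sqcup C_k$ with $C_i:=C\cap B_i$. Since $C_i\subseteq B_i$ and $E(B_i,B_j)=\varnothing$ for $i\ne j$, one immediately gets $E(C_i,C_j)=\varnothing$ as well. I would then induct on the number of nonempty parts, applying the second (nonadjacent) inequality of Lemma \ref{lemma:basic-phi}: in the inductive step, $C_i$ and $\bigcup_{j\ne i}C_j$ are disjoint and nonadjacent (pairwise nonadjacency is preserved under taking unions on one side), so
\[
\phi(C)\ge\min\bigl\{\phi(C_i),\,\phi(\textstyle\bigcup_{j\ne i}C_j)\bigr\},
\]
and unrolling the induction gives $\phi(C)\ge\min\{\phi(C_i):C_i\ne\varnothing\}$. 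Since each nonempty $C_i$ is a subset of $B_i$, we have $\phi(C_i)\ge h(B_i)\ge\min_j h(B_j)$, hence $\phi(C)\ge\min_j h(B_j)$. Taking the minimum over $C$ yields $h(B_1\cup\cdots\cup B_k)\ge\min_j h(B_j)$, which combined with the first part gives equality.

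The only mildly delicate point, which I view as the main (though minor) obstacle, is the bookkeeping in the induction: one must ensure that empty $C_i$ are discarded without breaking the chain of inequalities, and that pairwise nonadjacency of the $C_i$'s is transferred to nonadjacency of $C_i$ and $\bigcup_{j\ne i}C_j$ at each step. This follows from the elementary identity $E\bigl(C_i,\bigcup_{j\ne i}C_j\bigr)=\bigcup_{j\ne i}E(C_i,C_j)=\varnothing$. Once this is in place, both assertions of the lemma are immediate consequences of Lemma \ref{lemma:basic-phi} and the definition of $h$.
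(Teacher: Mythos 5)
Your proof is correct and follows essentially the same route as the paper: the unconditional inequality is just the monotonicity of $h$ under enlarging the ambient set, and the reverse inequality in the nonadjacent case comes from splitting a subset into nonadjacent pieces and invoking the second inequality of Lemma \ref{lemma:basic-phi}. The only cosmetic difference is that you decompose $C$ as $\bigcup_i (C\cap B_i)$ and induct, while the paper passes to a connected component of a minimizer of $\phi$ and observes that such a component must lie inside a single $B_j$; both reductions rest on the same nonadjacency mechanism.
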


%\subsubsection{Proof of Lemma \ref{lem:cheeger-set-disjoint}}
\begin{proof}[Proof of Lemma \ref{lem:cheeger-set-disjoint}]
Suppose $C\subseteq B_1\cup\dots\cup B_k$ is such that $\phi(C)=h(B_1\cup\dots\cup B_k)$, then we can assume that $C$ is connected, because otherwise we can replace $C$ by one of its connected components that possesses the minimum $\phi$-value without affecting the minimality by Lemma \ref{lemma:basic-phi}, so $C\subseteq B_j$ for some $1\le j\le k$. Thus, \[\phi(C)=h(B_j)=\min\{h(B_1),\dots,h(B_k)\},\] this concludes the proof.
\end{proof}

We note that the geometric versions of Lemmas \ref{lemma:basic-phi} and \ref{lem:cheeger-set-disjoint} are known \cite{Bobkov-Parini,Mazon}. 

\begin{lemma}\label{lem:pigeonhole}
Let $n$ be a positive integer. Suppose that \[(A_1,\dots,A_k)\text{ and }(B_1,\dots,B_{n-k+1})\] are subpartitions of $[n]:=\{1,\dots, n\}$, then there exists a nonempty subset $\{i_1,\dots,i_t\}$ of $[k]$ and a nonempty subset $\{j_1,\dots,j_s\}$ of $[n-k+1]$ such that \[A_{i_1}\cup\dots\cup A_{i_t}=B_{j_1}\cup\dots\cup B_{j_s}.\]
\end{lemma}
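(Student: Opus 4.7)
The plan is to reduce the statement to a pigeonhole count on a bipartite intersection graph built from the two subpartitions. First I will complete both subpartitions to genuine partitions of $[n]$ by introducing ``ghost'' sets $A_0 := [n] \setminus (A_1 \cup \dots \cup A_k)$ and $B_0 := [n] \setminus (B_1 \cup \dots \cup B_{n-k+1})$, either of which may be empty. I then build a bipartite graph $H$ whose two vertex classes are $\{A_0, A_1, \dots, A_k\}$ and $\{B_0, B_1, \dots, B_{n-k+1}\}$, joining $A_i$ to $B_j$ exactly when $A_i \cap B_j \neq \varnothing$.

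The heart of the argument is a vertex-versus-edge count. The graph $H$ has $(k+1) + (n-k+2) = n+3$ vertices. It has at most $n$ edges, since each element $v \in [n]$ lies in a unique cell $A_{i(v)} \cap B_{j(v)}$ and so contributes to exactly one edge. Because any graph with $V$ vertices and $E$ edges has at least $V - E$ connected components, $H$ has at least three components, and so at least one of them avoids both ghost vertices $A_0$ and $B_0$.

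From such a ``good'' component $K$ I will read off the desired index sets $I = \{i_1, \dots, i_t\} \subseteq [k]$ and $J = \{j_1, \dots, j_s\} \subseteq [n-k+1]$. Both must be nonempty: if, say, $K$ contained no $B$-vertex, then each $A_{i_\ell}$ would have empty intersection with every $B_j$ including $B_0$, forcing $A_{i_\ell} = \varnothing$ and contradicting the subpartition hypothesis. To finish I will verify the equality $A_{i_1} \cup \dots \cup A_{i_t} = B_{j_1} \cup \dots \cup B_{j_s}$: for $v \in A_{i_\ell}$, the cell containing $v$ is $A_{i_\ell} \cap B_j$ for some $j$, which produces an edge of $H$ dragging $B_j$ into $K$, and the fact that $K$ avoids $B_0$ forces $j \in J$; the reverse inclusion is symmetric.

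The main conceptual obstacle will be recognizing that the ``bare'' bipartite graph on $\{A_1, \dots, A_k\}$ and $\{B_1, \dots, B_{n-k+1}\}$ is not quite enough, because elements of $[n]$ covered by one subpartition but missed by the other cannot be accounted for there. Adjoining the two ghost vertices $A_0$ and $B_0$ repairs the bookkeeping and is exactly what makes the estimate $V - E \geq 3$ produce a component disjoint from both; after that the proof is completely mechanical.
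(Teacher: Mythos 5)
Your proof is correct, and it takes a genuinely different route from the paper's. The paper argues by strong induction on $n$: if the two subpartitions are not both partitions of $[n]$, it passes to $C=(\bigcup_i A_i)\cap(\bigcup_j B_j)$, notes that at most $n-|C|$ of the $n+1$ sets meet $[n]\setminus C$ (each element outside $C$ lies in at most one of them), so at least $|C|+1$ of the sets lie entirely inside $C$, and applies the induction hypothesis to the two subfamilies thus obtained; this is terse and leaves a couple of routine checks implicit (e.g.\ that both types contribute at least one set to those subfamilies, and that one can trim to exactly $|C|+1$ sets split as $k'$ and $|C|-k'+1$). You replace the induction by a single global count: after adjoining the ghost cells $A_0,B_0$, the bipartite intersection graph $H$ has $n+3$ vertices and at most $n$ edges (one per nonempty cell $A_i\cap B_j$), hence at least three components by $c(H)\ge |V(H)|-|E(H)|$, and any component avoiding both ghosts reads off the desired common union. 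Your treatment of the two delicate points is sound: an isolated non-ghost vertex would force the corresponding set to be empty (since the opposite side, ghosts included, partitions $[n]$), so the good component meets both sides; and the two inclusions follow because the cell of any $v$ in a selected set drags the partner set into the same component, which cannot be a ghost. Nothing breaks if $A_0$ or $B_0$ is empty --- they become isolated vertices, which only helps the component count. The payoff of your version is that it is non-inductive and makes transparent why $k$ and $n-k+1$ are exactly the right sizes (they are what force $|V(H)|-|E(H)|\ge 3$); the cost is the auxiliary graph, whereas the paper's induction is shorter to state but offloads more verification to the reader.
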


\begin{proof}[Proof of Lemma \ref{lem:pigeonhole}]
We apply strong induction on $n$. The base case $n=1$ is obvious. In general, if $(A_1,\dots,A_k)$ and $(B_1,\dots,B_{n-k+1})$ are partitions of $[n]$, then we are done. Otherwise, let \[C=\left(\bigcup_{1\le i\le k} A_i\right)\bigcap\left(\bigcup_{1\le j\le n-k+1} B_j\right),\] then we have $c:=|C|<n$. Note that at most $n-c$ subsets among the $A_i$'s and the $B_j$'s contain elements outside $C$. In other words, at least $c+1$ subsets contain only elements in $C$. Now we apply the induction hypothesis on $C$ and the two subpartitions formed by those at least $c+1$ subsets.
\end{proof}

\subsubsection{Proof of Proposition \ref{prop:unioninv}}
\begin{proof}
The proposition follows immediately from Lemma \ref{lem:cheeger-set-disjoint}, which implies that choosing from $\mathcal U(A_1,\dots,A_k)$ will not make it strictly larger than choosing from $\{A_1,\dots,A_k\}$.
\end{proof}

\subsubsection{Proof of Theorem \ref{thm:h_k=H_k}}
\begin{proof}
Due to its overwhelming length, we divide our proof into several steps.

\vspace{0.3cm}

\noindent\underline{Step 1. Setup.}
\newline\noindent For ease of presentation, we prove the case when $G$ is a weighted tree, whereas the proof for the general case of weighted forests is done by mimicking the following proof. Let \[S_1:=\argmin_{\mathcal A=(A_1,\dots,A_k)\in\mathcal{P}_k(V)}~\max_{1\le i\le k}\phi(A_i)\] be the set of all subpartitions that minimize the maximum $\phi$-value among the subsets. For reasons that will become clear later, we do not attempt to start working on a subpartition in $S_1$. Instead, we need more requirements on the subpartitions than that and furthur refine our desired class by letting \[S_2:=\argmin_{\mathcal A=(A_1,\dots,A_k)\in S_1}f(\mathcal A),\] where \[f(\mathcal A)=|\{1\le i\le k\colon\phi(A_i)=h_k\}|,\] be the set of all subpartitions in $S_1$ minimizing the number of subsets sharing the maximum $\phi$-value and \[S_3:=\argmin_{\mathcal A=(A_1,\dots,A_k)\in S_2}~\sum_{i=1}^k\mu(A_i)\] be the set of all subpartitions in $S_2$ that minimize the total weighted degree. Now, pick any subpartition $\mathcal A=(A_1,\dots,A_k)\in S_3$. By reordering if necessary, suppose that \[\phi(A_1)\le\dots\le\phi(A_k)=h_k\] without loss of generality.

\vspace{0.3cm}

\noindent\underline{Step 2. Achieving pairwise nonadjacency.}
\newline\noindent We prove that there exists $v_1\in A_1,\dots,v_{k-1}\in A_{k-1}$ such that \[A_1\setminus\{v_1\},\dots,A_{k-1}\setminus\{v_{k-1}\},V\setminus(A_1\cup\dots\cup A_{k-1})\] are pairwise nonadjacent in this step.

Indeed, since $\mathcal A=(A_1,\dots,A_k)\in S_3$, each $A_i$ is connected. (This is because otherwise, if some $A_j$ is not connected, then we write \[A_j=A_{j1}\cup\dots\cup A_{jm},\] where $A_{j1},\dots,A_{jm}$ are the $m$ connected components of $A_j$. Now, by Lemma \ref{lemma:basic-phi}, since \[E(A_{j1},A_{j2}\cup\dots\cup A_{jm})=\varnothing,\] we have \[\min\{\phi(A_{j1}),\phi(A_{j2}\cup\dots\cup A_{jm})\}\le\phi(A_j).\] If it happens that $\phi(A_{j1})>\phi(A_j)$, then we apply Lemma \ref{lemma:basic-phi} and the same procedure on \[A_{j2}\cup\dots\cup A_{jm}=A_{j2}\cup(A_{j3}\cup\dots\cup A_{jm}),\] and eventually we will obtain some $1\le p\le m$ such that $\phi(A_{jp})\le\phi(A_j)$. Now note that the subpartition \[\tilde{\mathcal A}=(A_1,\dots,A_{jp},\dots,A_k)\] is in $S_1$, since replacing by $A_j$ by $A_{jp}$ does not increase the maximum $\phi$-value. For this same reason, we can infer that $\tilde{\mathcal A}\in S_2$ as well. However, the total weighted degree of $\tilde{\mathcal A}$ is smaller than that of $\mathcal A$, which contradicts the assumption that $\mathcal A\in S_3$. Therefore, each $A_i$ is connected.) Now, recall that the graph $G$ we are working on is assumed to be a tree, so we can make the $k$ connected subsets $A_1,\dots,A_k$ pairwise nonadjacent by removing a vertex in each of some $k-1$ subsets. (See Figure \ref{fig:pairwisenonadj}).

\begin{comment}
\begin{figure}[h!]
\centering
\includegraphics[scale=0.15]{thm1.jpg}
\caption{Choosing $v_1,\dots,v_{k-1}$}
\end{figure}
\end{comment}

\begin{figure}
\centering
\begin{tikzpicture}[scale=2]
\draw (0,0)--(-2,-1)--(-3,-2);
\draw (0,0)--(-1,-1);
\draw (0,0)--(1,-1);
\draw (0,0)--(2,-1)--(3,-2);
\draw (2,-1)--(1.7,-1.4);
\draw (2,-1)--(2.3,-1.4);
\draw (-2,-1)--(-2.3,-1.4);
\draw (-2,-1)--(-1.7,-1.4);
\draw (-2,-1)--(-1,-2);
\draw (2,-1)--(1,-2);
\draw (-3,-2)--(-3.25,-3);
\draw (-3,-2)--(-2.75,-3);
\draw (-1,-2)--(-1.25,-3);
\draw (-1,-2)--(-0.75,-3);
\draw (1,-2)--(0.75,-3);
\draw (1,-2)--(1.25,-3);
\draw (3,-2)--(2.75,-3);
\draw (3,-2)--(3.25,-3);
\draw (-3.75,-4)--(-3.75,-4.5);
\draw (-3.25,-4)--(-3.25,-4.5);
\draw (-2.75,-4)--(-2.75,-4.5);
\draw (-2.25,-4)--(-2.25,-4.5);
\draw (-1.75,-4)--(-1.75,-4.5);
\draw (-1.25,-4)--(-1.25,-4.5);
\draw (-0.75,-4)--(-0.75,-4.5);
\draw (-0.25,-4)--(-0.25,-4.5);
\draw (0.25,-4)--(0.25,-4.5);
\draw (0.75,-4)--(0.75,-4.5);
\draw (1.25,-4)--(1.25,-4.5);
\draw (1.75,-4)--(1.75,-4.5);
\draw (2.25,-4)--(2.25,-4.5);
\draw (2.75,-4)--(2.75,-4.5);
\draw (3.25,-4)--(3.25,-4.5);
\draw (3.75,-4)--(3.75,-4.5);
\node (1) at (0,0) {$\bullet$};
\node (1) at (-2,-1) {$\bullet$};
\node (1) at (2,-1) {$\bullet$};
\node (1) at (2,-1.4) {$\cdots$};
\node (1) at (-2,-1.4) {$\cdots$};
\node (1) at (-3,-3) {$\cdots$};
\node (1) at (-1,-3) {$\cdots$};
\node (1) at (1,-3) {$\cdots$};
\node (1) at (3,-3) {$\cdots$};
\node (1) at (-3.5,-3.8) {$\cdots$};
\node (1) at (-2.5,-3.8) {$\cdots$};
\node (1) at (-1.5,-3.8) {$\cdots$};
\node (1) at (-0.5,-3.8) {$\cdots$};
\node (1) at (0.5,-3.8) {$\cdots$};
\node (1) at (1.5,-3.8) {$\cdots$};
\node (1) at (2.5,-3.8) {$\cdots$};
\node (1) at (3.5,-3.8) {$\cdots$};
\node (1) at (0,-1) {$\cdots$};
\node (1) at (-3.75,-4.75) {$\vdots$};
\node (1) at (-3.25,-4.75) {$\vdots$};
\node (1) at (-2.75,-4.75) {$\vdots$};
\node (1) at (-2.25,-4.75) {$\vdots$};
\node (1) at (-1.75,-4.75) {$\vdots$};
\node (1) at (-1.25,-4.75) {$\vdots$};
\node (1) at (-0.75,-4.75) {$\vdots$};
\node (1) at (-0.25,-4.75) {$\vdots$};
\node (1) at (0.25,-4.75) {$\vdots$};
\node (1) at (0.75,-4.75) {$\vdots$};
\node (1) at (1.25,-4.75) {$\vdots$};
\node (1) at (1.75,-4.75) {$\vdots$};
\node (1) at (2.25,-4.75) {$\vdots$};
\node (1) at (2.75,-4.75) {$\vdots$};
\node (1) at (3.25,-4.75) {$\vdots$};
\node (1) at (3.75,-4.75) {$\vdots$};
\node (1) at (2,-6.1) {\color{blue}$\cdots$};
\node (1) at (2,-2) {\color{red}$\cdots$};

\node (1) at (-3,-2) {\color{red}$\bullet$};
\node (v1) at (-3.3,-2) {\color{red}$v_1$};
\draw[blue,dotted,thick] (-3,-3.8)ellipse(0.96cm and 2.1cm);
\node (A1) at (-3,-6.1){\color{blue}$A_1$};
\draw (-3,-2)--(-3.5,-3)--(-3.25,-4);
\draw (-3.5,-3)--(-3.75,-4);
\draw (-2.25,-4)--(-2.5,-3)--(-2.75,-4);
\draw (-3,-2)--(-2.5,-3);
\node (1) at (-3.5,-3) {$\bullet$};
\node (1) at (-2.5,-3) {$\bullet$};
\node (1) at (-3.75,-4){$\bullet$};
\node (1) at (-3.25,-4){$\bullet$};
\node (1) at (-2.75,-4){$\bullet$};
\node (1) at (-2.25,-4){$\bullet$};

\node (v2) at (-1.3,-2) {\color{red}$v_2$};
\node (1) at (-1,-2) {\color{red}$\bullet$};
\draw[blue,dotted,thick] (-1,-3.8)ellipse(0.96cm and 2.1cm);
\node (A2) at (-1,-6.1){\color{blue}$A_2$};
\draw (-1,-2)--(-1.5,-3)--(-1.25,-4);
\draw (-1.5,-3)--(-1.75,-4);
\draw (-0.25,-4)--(-0.5,-3)--(-0.75,-4);
\draw (-1,-2)--(-0.5,-3);
\node (1) at (-1.5,-3) {$\bullet$};
\node (1) at (-0.5,-3) {$\bullet$};
\node (1) at (-1.75,-4){$\bullet$};
\node (1) at (-1.25,-4){$\bullet$};
\node (1) at (-0.75,-4){$\bullet$};
\node (1) at (-0.25,-4){$\bullet$};

\node (v3) at (0.7,-2) {\color{red}$v_3$};
\node (1) at (1,-2) {\color{red}$\bullet$};
\draw[blue,dotted,thick] (1,-3.8)ellipse(0.96cm and 2.1cm);
\node (A3) at (1,-6.1){\color{blue}$A_3$};
\draw (1,-2)--(0.5,-3)--(0.75,-4);
\draw (0.5,-3)--(0.25,-4);
\draw (1.75,-4)--(1.5,-3)--(1.25,-4);
\draw (1,-2)--(1.5,-3);
\node (1) at (0.5,-3) {$\bullet$};
\node (1) at (1.5,-3) {$\bullet$};
\node (1) at (0.25,-4){$\bullet$};
\node (1) at (0.75,-4){$\bullet$};
\node (1) at (1.25,-4){$\bullet$};
\node (1) at (1.75,-4){$\bullet$};

\node (v4) at (2.7,-2) {\color{red}$v_{k-1}$};
\node (1) at (3,-2) {\color{red}$\bullet$};
\draw[blue,dotted,thick] (3,-3.8)ellipse(0.96cm and 2.1cm);
\node (A4) at (3,-6.1){\color{blue}$A_{k-1}$};
\draw (3,-2)--(2.5,-3)--(2.75,-4);
\draw (2.5,-3)--(2.25,-4);
\draw (3.75,-4)--(3.5,-3)--(3.25,-4);
\draw (3,-2)--(3.5,-3);
\node (1) at (2.5,-3) {$\bullet$};
\node (1) at (3.5,-3) {$\bullet$};
\node (1) at (2.25,-4){$\bullet$};
\node (1) at (2.75,-4){$\bullet$};
\node (1) at (3.25,-4){$\bullet$};
\node (1) at (3.75,-4){$\bullet$};

\draw[blue,dotted,thick] (0,-0.75)ellipse(4cm and 0.9cm);
\node (V) at (0,0.5){\color{blue}$V\setminus(A_1\dots,A_{k-1})$};
\end{tikzpicture}
\caption{Choosing $v_1,\dots,v_{k-1}$.}
\label{fig:pairwisenonadj}
\end{figure}
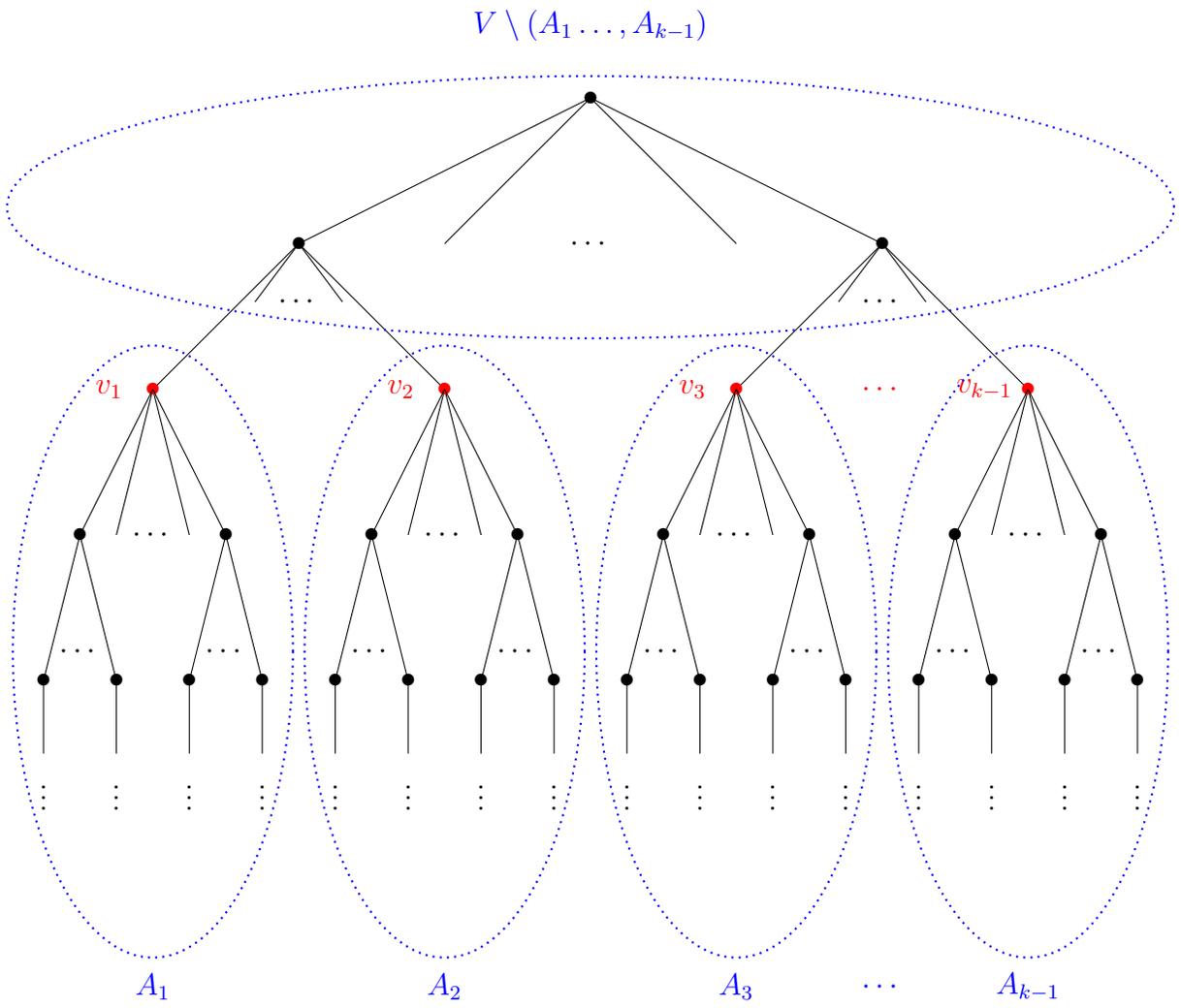

\vspace{0.3cm}

\noindent\underline{Step 3. Showing $h(V\setminus(A_1\cup\dots\cup A_{k-1}))=h_k(G)$.}
\newline\noindent Firstly, note that we have \[h(V\setminus(A_1\cup\dots\cup A_{k-1}))\le h_k(G).\] Indeed, since \[A_k\subseteq V\setminus(A_1\cup\dots\cup A_{k-1}),\] we have \[h(V\setminus(A_1\cup\dots\cup A_{k-1}))=\min_{C\subseteq V\setminus(A_1\cup\dots\cup A_{k-1})}\phi(C)\le\phi(A_k)=h_k(G).\]

Next, for the sake of contradiction, we suppose that $h(V\setminus(A_1\cup\dots\cup A_{k-1}))<h_k(G)$. Then, by the definition of $h$, there is some $A_k'\subseteq V\setminus(A_1\cup\dots\cup A_{k-1})$ such that \[\phi(A_k')=h(V\setminus(A_1\cup\dots\cup A_{k-1}))<h_k(G).\] Then, we can replace $A_k$ by $A_k'$ and consider the subpartition \[\mathcal A':=(A_1',\dots,A_{k-1}',A_k'):=(A_1,\dots,A_{k-1},A_k'),\] where $A_i':=A_i$ for $1\le i\le k-1$. We divide into two cases:

\vspace{0.3cm}

\noindent\textsl{Case 1.} $\phi(A_{k-1})=\phi(A_k)=h_k(G)$.
\newline Then, $f(\mathcal A')=f(\mathcal A)-1<f(\mathcal A)$ contradicts the assumption that $\mathcal A\in S_3\subseteq S_2$.

\vspace{0.3cm}

\noindent\textsl{Case 2.} $\phi(A_{k-1})<\phi(A_k)=h_k(G)$.
\newline Then, \[\max_{1\le i\le k}\phi(A_i')=\max\{\phi(A_{k-1}),\phi(A_k')\}<h_k(G)\] contradicts the assumption that $\mathcal A\in S_3\subseteq S_1$.

\vspace{0.3cm}

Therefore, we must have $h(V\setminus(A_1\cup\dots\cup A_{k-1}))=h_k(G)$. 

\vspace{0.3cm}

\noindent\underline{Step 4. Showing $h(A_i\setminus\{v_i\})\ge h_k(G)\text{ for any }1\le i\le k-1\text{ such that }A_i\setminus\{v_i\}\ne\varnothing$.}
\newline\noindent Suppose on the contrary that, for some $1\le j\le k-1$, $h(A_j\setminus\{v_j\})<h_k$. Then, by the definition of $h(\cdot)$, there exists a subset $\overline A_j\subseteq A_j\setminus\{v_j\}$ such that $h_k(G)>h(A_j\setminus\{v_j\})=\phi(\overline A_j)$ we can replace the subpartition $\mathcal A$ by \[\overline{\mathcal A}=(A_1,\dots,\overline A_j,\dots,A_k)\in S_2.\] Note that $\overline{\mathcal A}\in S_1$ because the $\phi$-value of each of the $k$ subsets in $\mathcal A$ does not exceed $h_k(G)$. Also, $\overline{\mathcal A}\in S_2$ because $f(\overline{\mathcal A})$ either equals $f(\mathcal A)$ or $f(\mathcal A)-1$ (when $\phi(A_j)<h_k$ or $\phi(A_j)=h_k$, respectively). However, we have $\mu(\overline{\mathcal A})=\mu(\mathcal A)-\mu_{v_j}<\mu(\mathcal A)$. This contradicts the assumption that $\mathcal A\in S_3$.

\vspace{0.3cm}

\noindent\underline{Step 5. Showing $\underline{\ell_k}(G)\ge h_k(G)$ for any weighted tree $G$.}
\newline\noindent Indeed, we have
\begin{center}\begin{align}\underline{\ell_k}(G)&=\max_{A\subseteq V,~|A|=n-k+1}h(A)\notag
\\&\ge h(V\setminus\{v_1,\dots,v_{k-1}\})\notag
\\&=\min\{h(V\setminus(A_1\cup\dots\cup A_{k-1})),h(A_i\setminus\{v_i\})\colon 1\le i\le k-1,A_i\setminus\{v_i\}\ne\varnothing\}\label{eq:h-decompose}
\\&=h(V\setminus(A_1\cup\dots\cup A_{k-1}))\label{eq:hgehk}
\\&=h_k(G),\notag\end{align}\end{center}
where the equality \eqref{eq:h-decompose} is based on {Lemma \ref{lem:cheeger-set-disjoint}} and the pairwise-nonadjacency of the $k$ subsets, and the equality \eqref{eq:hgehk} is due to the result proved in the last step.

\vspace{0.3cm}

\noindent\underline{Step 6. Sketch for the general case when $G$ is a weighted forest.}
\newline\noindent We can actually apply the same idea as above. Namely, pick a subpartition from $S_3$ and remove $v_1\dots,v_{k-1}$ in order to achieve the pairwise nonadjacency. If there are some remaining quotas of the $k-1$ quotas, we can safely remove any vertices that are unrelated to the subgraph that achieves the minimum $\phi$-value.

\begin{comment}
\vspace{0.3cm}

\noindent\underline{Step 6. Showing $\underline{\ell_k}(G)\le h_k(G)$ for any weighted graph $G$.}
\newline\noindent Indeed, for any $(A_1,\dots,A_k)\in S_1$ and any $A\subseteq V$ with $|A|=n-k+1$, by pigeonhole principle, we can pick $i_0\in\{1,\dots,k\}$ such that $A_{i_0}\subseteq A$, so \[\max_{1\le i\le k}\phi(A_i)\ge\phi(A_{i_0})\ge\min_{\varnothing\ne B\subseteq A}\phi(B).\] Since the choices of $(A_1,\dots,A_k)$ and $A$ were arbitrary, taking the minimum on the left and the maximum on the right establishes the desired inequality.
\end{comment}

\vspace{0.3cm}

\noindent\underline{Step 7. Showing $h_k(\tilde G)\ge\ell_k(\tilde G)$ for any weighted graph $\tilde G=(\tilde V,\tilde E,\tilde\mu,\tilde w)$.}
\newline\noindent It suffices to show that for any subpartitions $(A_1,\dots,A_k)$ and $(B_1,\dots,B_{n-k+1})$ of $\tilde V$, we have \[\max_{1\le i\le k}\phi(A_i)\ge\min_{B\in \mathcal{U}(B_1,\cdots,B_{n-k+1})}\phi(B).\] Let $|\tilde V|=n$, apply Lemma \ref{lem:pigeonhole} to take the desired common union $C$. Note that the induced subgraph is connected, so the minimum on the right side is attained by $C$, while the maximum on the left side is attained by some ``worst'' connected component of the induced subgraph of $C$, whose $\phi$-value must not be less than that of the induced subgraph.

\vspace{0.3cm}

\noindent\underline{Step 8. Conclusion.}
\newline\noindent Since it is obvious that $\ell_k(\tilde G)\ge\underline{\ell_k}(\tilde G)$ for any weighted graph $\tilde G$, the proof is established.

\end{proof}

\subsubsection{Proofs of Theorem \ref{thm:cycle-number-inequ} and Theorem \ref{thm:CombinatorialCourant-Fischer}}
\begin{proof}[Proof of Theorem \ref{thm:cycle-number-inequ}]
The proof is essentially the same as that of Theorem \ref{thm:h_k=H_k}. We only need to notice the fact that, in a connected component with $\beta$ loops, it takes us to remove at most $s+\beta-1$ vertices to separate the induced subgraphs of $s$ pairwise nonadjacent subset of vertices. Now apply Lemma \ref{lem:cheeger-set-disjoint}.
\end{proof}

\begin{proof}[Proof of Theorem \ref{thm:CombinatorialCourant-Fischer}]
The combinatorial Courant-Fischer-Weyl minimax principle is a direct consequence of Proposition \ref{prop:unioninv} and Theorem \ref{thm:h_k=H_k}.
\end{proof}

\subsection{Proofs of the Main Applications (Analytical)}
\subsubsection{Tools and Properties of Graph $p$-Laplacian
} \label{sec:tools}
As stated in Section \ref{sec:refined-Cheeger}, to make the higher order Cheeger inequality concise, in this section we assume $\mu_v=\sum_{u\in V:u\sim v}w_{uv}$ for any $v\in V$.
Under this assumption, the second-named author of this paper established in \cite{Zhang} the monotonicity property of graph $p$-Laplacian as follows: 
\begin{lemma}[monotonicity lemma \cite{Zhang}]\label{lem:p-monotonicity}
Given a weighted graph $G=(V,E,\mu,w)$, assume that $\mu_v=\sum_{u\in V:u\sim v}w_{uv}$ for any $v\in V$. 
For any $k$, the $k$-th min-max eigenvalue $\lambda_k(\Delta_p)$ is locally Lipschitz continuous with respect to $p$, and moreover, 
\begin{itemize}
    \item the function $p\mapsto p(2\lambda_k(\Delta_p))^{\frac1p}$ 
is increasing  on $%p\in
[1,+\infty)$,
\item %and 
the function 
$p\mapsto2^{-p}\lambda_k(\Delta_p)$ is decreasing on $[1,+\infty)$.
\end{itemize}    
\end{lemma}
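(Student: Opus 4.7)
The plan is to exploit the variational characterization \eqref{eq:discrete}, in which the family of admissible test sets $S$ is independent of $p$, and to transfer near-optimal witnesses between exponents $p\le q$ via the odd continuous power substitution $\eta_r(x)_v := \mathrm{sgn}(x_v)|x_v|^r$, which preserves the admissibility index by property~(4). Under the assumption $\mu_v=\sum_{u\sim v}w_{uv}$, the Rayleigh quotient in \eqref{eq:discrete} rewrites in edge-symmetric form
\[
R_p(x)=\frac{\sum_{\{u,v\}\in E}w_{uv}|x_u-x_v|^p}{\sum_{\{u,v\}\in E}w_{uv}(|x_u|^p+|x_v|^p)},
\]
which makes per-edge estimates possible. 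The local Lipschitz continuity follows from standard min-max sensitivity: by positive-homogeneity, $S$ may be restricted to the unit sphere, on which $p\mapsto R_p(x)$ is smooth with uniformly bounded derivative, so the minimax $\lambda_k(\Delta_p)$ inherits local Lipschitz continuity on $[1,\infty)$.

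For the decreasing claim $p\mapsto 2^{-p}\lambda_k(\Delta_p)$, fix $1\le p\le q$ and push a near-optimal witness $S$ for $\lambda_k(\Delta_p)$ forward to $\eta_{p/q}(S)$, which is admissible for $\lambda_k(\Delta_q)$. Writing $y=\eta_{p/q}(x)$, one has $\sum_v\mu_v|y_v|^q=\sum_v\mu_v|x_v|^p$, and the pointwise per-edge inequality $|y_u-y_v|^q\le 2^{q-p}|x_u-x_v|^p$ can be established by case analysis: for same-sign $a,b$ with $a\ge b\ge 0$, subadditivity of $t\mapsto t^{p/q}$ (as $p/q\le 1$) yields $a^{p/q}-b^{p/q}\le(a-b)^{p/q}$, hence $(a^{p/q}-b^{p/q})^q\le(a-b)^p$; for opposite signs, maximizing $(a^{p/q}+c^{p/q})^q$ over $a+c=1$ with $a,c\ge 0$ identifies $a=c=1/2$ as the maximizer and produces the sharp constant $2^{q-p}$. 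Summing over edges gives $R_q(y)\le 2^{q-p}R_p(x)$ and hence $\lambda_k(\Delta_q)\le 2^{q-p}\lambda_k(\Delta_p)$.

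The increasing claim $p\mapsto p(2\lambda_k(\Delta_p))^{1/p}$, equivalent to $2\lambda_k(\Delta_p)\le(q/p)^p(2\lambda_k(\Delta_q))^{p/q}$, is the harder direction. Here I use the reverse substitution $y=\eta_{q/p}(x)$ to transfer witnesses for $\lambda_k(\Delta_q)$ back to $\lambda_k(\Delta_p)$. The mean value theorem gives the same-sign bound $(a^{q/p}-b^{q/p})^p\le(q/p)^p\max(a,b)^{q-p}(a-b)^p$ for $a\ge b\ge 0$, and the opposite-sign analogue $(a^{q/p}+c^{q/p})^p\le(q/p)^p(a+c)^p\max(a,c)^{q-p}$ reduces by homogeneity to the elementary inequality $1+t^{q/p}\le(q/p)(1+t)$ on $t\in[0,1]$, verifiable by comparing endpoint values and derivatives. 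Applying Hölder's inequality with conjugate exponents $(q/p,\,q/(q-p))$ to the edge sum, and bounding $\sum w_{uv}\max(|x_u|,|x_v|)^q\le\sum_v\mu_v|x_v|^q$, yields $R_p(y)\le(q/p)^pR_q(x)^{p/q}$. The remaining obstacle is recovering the additional factor $2^{p/q-1}$ needed to pass from this bound to the sharp form involving $2\lambda_k$: this calls for a refined accounting of the Hölder and per-edge losses near configurations with $|x_u|=|x_v|$ on the extremal edges, and is the main technical content of the argument in~\cite{Zhang}.
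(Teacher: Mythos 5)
First, note that the paper does not prove this lemma at all: it is imported verbatim from \cite{Zhang}, so there is no internal proof to compare against. Judged on its own merits, your argument for local Lipschitz continuity is fine in outline (on the unit sphere the denominator of $\Phi_p$ is bounded below, so $\partial_p\Phi_p$ is uniformly bounded on compact $p$-intervals and the min-max inherits the Lipschitz bound), and your proof of the decreasing claim is correct and complete: the substitution $y=\eta_{p/q}(x)$ preserves the denominator, the index property (4) makes $\eta_{p/q}(S)$ admissible, and the per-edge constants ($1$ for same signs by concavity, $2^{q-p}$ for opposite signs by Jensen) are exactly right.

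The increasing claim, however, contains a genuine gap that you yourself flag: what you actually prove is $\lambda_k(\Delta_p)\le(q/p)^p\lambda_k(\Delta_q)^{p/q}$, i.e.\ monotonicity of $p\mapsto p\,\lambda_k(\Delta_p)^{1/p}$ \emph{without} the factor $2$, which is strictly weaker than the stated monotonicity of $p\mapsto p\,(2\lambda_k(\Delta_p))^{1/p}$ because $2^{1/p}$ is decreasing. Worse, the repair you propose (``refined accounting'' of the same per-edge bound) cannot work: the per-edge inequality you would need, namely $(a^{q/p}-b^{q/p})^p\le(q/p)^p\,2^{p/q-1}\,(a-b)^p\max(a,b)^{q-p}$, is \emph{false} for $a,b$ of the same sign with $b$ close to $a$, since there the left side behaves like $(q/p)^p a^{q-p}(a-b)^p$ while $2^{p/q-1}<1$. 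The factor of $2$ is not a loss to be recovered from your estimate; it must be built in from the start by pairing each edge term against the \emph{full} per-edge denominator mass. Concretely, use the hypothesis $\mu_v=\sum_{u\sim v}w_{uv}$ to write the denominator as $\sum_{\{u,v\}\in E}w_{uv}(|x_u|^q+|x_v|^q)$, apply H\"older with exponents $(q/p,\,q/(q-p))$ against the weights $(|x_u|^q+|x_v|^q)^{(q-p)/q}$ rather than $\max(|x_u|,|x_v|)^{q-p}$, and prove instead the two-variable inequality $2\,|a^{q/p}\mp b^{q/p}|^p\le(q/p)^p\,2^{p/q}\,|a\mp b|^p\,(a^q+b^q)^{(q-p)/q}$ for $a,b\ge0$; this version is tight (not false) at $a=b$ and at $b=0$, and summing it over edges yields $2\Phi_p(y)\le(q/p)^p(2\Phi_q(x))^{p/q}$ directly. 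Until that sharper elementary inequality is established, the increasing half of the lemma is not proved.
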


For simplicity, we use $\Phi_p$ and $\II_k$ to denote the $L^p$-Rayleigh quotient and the set of origin-symmetric compact subsets with index $\ge k$, respectively \cite{Burger,Deidda25,HN25}. Their precise definitions are as follows:

Let $$\Phi_p(x)=\frac{\sum\limits_{u\sim v}w_{uv}|x_u-x_v|^p}{\sum\limits_v\mu_v|x_v|^p} $$
be the $L^p$-Rayleigh quotient on a graph $G=(V,E,\mu,w)$, where $p\ge1$. 

Let $$\II_k=\{S\subset \R^n:~S\text{ is origin-symmetric and compact with }\mathrm{ind}(S)\ge k\}.$$

\subsubsection{Proofs of Theorem \ref{thm:1-Lap=h} and Theorem \ref{thm:nonlinearCourant-Fischer}}
We first establish an intersection property of admissible indexes. 
\begin{lemma}[intersection lemma]\label{lem:intersect}
For any linear subspace $X\subseteq\mathbb{R}^n$ of dimension $n-k+1$, and any origin-symmetric subset $S$ with $\mathrm{ind}(S)\ge k$, we have $S\cap X\ne\varnothing$. 
\end{lemma}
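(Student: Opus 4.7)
The plan is to argue by contradiction and reduce the situation to an odd continuous map from $S$ into a sphere of too small a dimension, so that the admissibility axioms of $\mathrm{ind}$ force $\mathrm{ind}(S)\le k-1$.

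Suppose for contradiction that $S\cap X=\varnothing$. Since $0\in X$, this forces $0\notin S$ (which is consistent with the standing convention that $\mathrm{ind}$ is only nontrivial on origin-symmetric sets missing the origin). Let $P:\mathbb{R}^n\to X^\perp$ denote the orthogonal projection; since $\dim X=n-k+1$, we have $\dim X^\perp=k-1$. The map $P$ is linear, hence continuous and odd, and for every $x\in S$ we have $P(x)\ne 0$, because $P(x)=0$ would mean $x\in X$. Fix any linear isometry $L:X^\perp\to\mathbb{R}^{k-1}$ and define
\[
\eta:S\longrightarrow \mathbb{S}^{k-2},\qquad \eta(x):=\frac{L(P(x))}{\|L(P(x))\|}.
\]
Then $\eta$ is odd, continuous, and well-defined (its denominator never vanishes).

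Now I would apply the axioms in sequence. Property (4) (monotonicity under odd continuous maps) yields $\mathrm{ind}(S)\le \mathrm{ind}(\eta(S))$. Since $\eta(S)\subseteq \mathbb{S}^{k-2}$, property (2) (monotonicity under inclusion) gives $\mathrm{ind}(\eta(S))\le \mathrm{ind}(\mathbb{S}^{k-2})$, and property (1) identifies $\mathrm{ind}(\mathbb{S}^{k-2})=k-1$. Chaining these inequalities produces
\[
k\le \mathrm{ind}(S)\le k-1,
\]
which is the desired contradiction; hence $S\cap X\ne\varnothing$.

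I do not anticipate a serious obstacle. The only subtle point is justifying that $0\notin S$ so that the normalization in the definition of $\eta$ is legitimate; this is handled by the hypothesis $S\cap X=\varnothing$ together with $0\in X$. The use of the auxiliary isometry $L$ is just to invoke the standard form of property (1); one could instead formulate and verify property (1) for the unit sphere of $X^\perp$ directly using properties (2) and (4) applied to $L$ and $L^{-1}$, but the route above is cleaner.
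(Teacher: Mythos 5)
Your proposal is correct and follows essentially the same route as the paper: assume $S\cap X=\varnothing$, project onto a $(k-1)$-dimensional complement of $X$ (the paper uses the quotient $\mathbb{R}^n/X$, you use the orthogonal projection onto $X^\perp$), and combine properties (4), (2), and (1) to force $k\le\mathrm{ind}(S)\le k-1$. Your extra normalization step landing $\eta(S)$ inside $\mathbb{S}^{k-2}$ is in fact slightly more careful than the paper's appeal to $\mathrm{ind}(\mathbb{R}^{k-1})=k-1$, since property (1) is stated only for spheres.
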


\begin{proof}[Proof of Lemma \ref{lem:intersect}]

In fact, suppose the contrary, that $S\cap X=\varnothing$. Without loss of generality, we may assume $X=\mathbb{R}^{n-k+1}$. Consider the projection $P:\mathbb{R}^n\to\mathbb{R}^n/X\cong \mathbb{R}^{k-1}$, which is an odd continuous map. Since $S\cap X=\varnothing$, the image $P(S)\not\ni0$, and thus $P$ induces an odd continuous map from $S$ to $\mathbb{R}^{k-1}\setminus\{0\}$. 
It follows from the nondecreasing property under odd continuous map $P$ that  $\mathrm{ind}(P(S))\ge \mathrm{ind}(S)\ge k$. However, $P(S)\subseteq \mathbb{R}^{k-1}\setminus\{0\}$ and thus by the monotonicity of index, we have $\mathrm{ind}(P(S))\le \mathrm{ind}(\mathbb{R}^{k-1})=k-1$, which is a contradiction. 
\end{proof}

The following proposition asserts that we can always select a subset of the subpartition $\{A_1,\cdots,A_k\}$ whose union has a $\phi$-value smaller than or equal to $\Phi_1(x)$ for prescribed $x\in \mathrm{span}(\mathbbm 1_{A_1},\cdots,\mathbbm 1_{A_{k}})\setminus\{0\}$.
 
\begin{lemma}[selection lemma]\label{lem:levelset}
For any $x\in \mathrm{span}(\mathbbm 1_{A_1},\cdots,\mathbbm 1_{A_{n-k+1}})\setminus\{0\}$, there exists $B\in \mathcal{U}(A_1,\cdots,A_{n-k+1})$ such that $\Phi_1(x)\ge\phi(B)$. 
\end{lemma}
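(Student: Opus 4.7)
The plan is to apply the discrete coarea formula to the numerator of $\Phi_1(x)$ and the layer-cake representation to the denominator, then invoke the mediant inequality. Since $x$ lies in the span, I would write $x=\sum_{i=1}^{n-k+1} c_i \mathbbm 1_{A_i}$, so that $x$ is constant on each $A_i$ and vanishes off $\bigcup_i A_i$. The key observation is that for every $t>0$, the super-level set $S_t^+:=\{v:x_v>t\}=\bigcup_{i:\,c_i>t}A_i$ and the sub-level set $S_t^-:=\{v:x_v<-t\}=\bigcup_{i:\,c_i<-t}A_i$ both belong to $\mathcal U(A_1,\dots,A_{n-k+1})\cup\{\varnothing\}$.

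Starting from the pointwise identities $|x_u-x_v|=\int_{-\infty}^\infty \mathbbm 1_{[\min(x_u,x_v),\,\max(x_u,x_v))}(t)\,dt$ and $|x_v|=\int_0^\infty\mathbbm 1_{|x_v|>t}\,dt$, and using $\partial\{x>t\}=\partial\{x\le t\}$ to convert the $t<0$ portion, I would obtain
\[\sum_{u\sim v}w_{uv}|x_u-x_v|=\int_0^\infty\bigl[w(\partial S_t^+)+w(\partial S_t^-)\bigr]dt,\qquad \sum_v\mu_v|x_v|=\int_0^\infty\bigl[\mu(S_t^+)+\mu(S_t^-)\bigr]dt.\]
Both integrands are step functions of $t$ whose jumps occur only at the finitely many values $|c_i|$, so each integral collapses to a finite sum of the form $\sum_j\alpha_j\, w(\partial B_j)$ or $\sum_j\alpha_j\, \mu(B_j)$, respectively, with common weights $\alpha_j>0$ and, after discarding the terms with $B_j=\varnothing$ (which contribute zero to both sums), each remaining $B_j$ lying in $\mathcal U(A_1,\dots,A_{n-k+1})$.

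The conclusion then follows from the elementary mediant inequality: for positive weights $\alpha_j$ and positive denominators $b_j$, one has $\frac{\sum_j\alpha_j a_j}{\sum_j\alpha_j b_j}\ge\min_j\frac{a_j}{b_j}$, and hence $\Phi_1(x)\ge\min_j\phi(B_j)$; any $B$ achieving this minimum supplies the desired element of $\mathcal U(A_1,\dots,A_{n-k+1})$. The main obstacle I anticipate is the bookkeeping for sign changes of $x$: when some $c_i$ are negative, the naive super-level sets $\{x>t\}$ for $t<0$ also contain the vertices outside $\bigcup_i A_i$ (where $x=0>t$), so they need not lie in $\mathcal U$. Splitting the coarea integral at $t=0$ and passing to the complement on the negative half cures this, since $\{x<-t\}$ for $t>0$ is once again a union of $A_i$'s, which is precisely what makes the argument close.
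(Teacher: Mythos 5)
Your proof is correct and takes essentially the same route as the paper: the paper first splits $x=x^+-x^-$ and reduces to the nonnegative case before applying the layer-cake representation and a mediant-type inequality, whereas you run the coarea formula on all of $x$ at once and split the integral at $t=0$, but the level sets, the membership in $\mathcal U(A_1,\dots,A_{n-k+1})$, and the final mediant step are identical in substance. Your explicit handling of the negative levels via $\{x<-t\}$ is a clean way to package the sign bookkeeping that the paper handles through the $x^+,x^-$ decomposition.
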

\begin{proof}[Proof of Lemma \ref{lem:levelset}]
For any \[x=\sum\limits_{i=1}^{n-k+1}t_i\mathbbm 1_{A_i}\in\R^n\setminus\{0\},\] there exist $ x^+, x^-\in \R^n_+$ such that $ x= x^+- x^-$, where \[x^+=\sum\limits_{i\in\{1,\cdots,n-k+1\}:~t_i\ge 0}t_i\mathbbm 1_{A_i}\text{ and }x^-=\sum\limits_{i\in\{1,\cdots,n-k+1\}:~t_i\le 0}t_i\mathbbm 1_{A_i}.\] It is easy to check that 
\begin{equation}\label{eq:F_1x+x-}
 \Phi_1( x)=\frac{\sum\limits_{u\sim v}w_{uv}|x_u^+-x_v^+|+\sum\limits_{u\sim v}w_{uv}|x_u^--x_v^-|}{\sum\limits_{v\in V}\mu_v|x_v^+|+\sum\limits_{v\in V}\mu_v|x_v^-|}\ge\min\{\Phi_1( x^+),\Phi_1( x^-)\}.   
\end{equation} 
Therefore, to prove this lemma, it suffices to deal with the case when $ x\in \R^n_+\setminus\{0\}$. Using layer cake representation, it can be verified that
\[\Phi_1( x)=\frac{\int_0^{\| x\|_\infty}w(\partial B^t)dt}{\int_0^{\| x\|_\infty}\mu(B^t)dt}\ge \frac{w(\partial B^{t'})}{\mu(B^{t'})}=\phi(B_{t'})\]
for some $0<t'<\| x\|_\infty:=\max\limits_{i=1,\cdots,n-k+1}t_i$, 
where $B^t%( x)
:=\{v\in V:x_v>t\}$. Clearly, 
$$B^{t'}=\bigcup_{i\in\{1,\cdots,n-k+1\}:t_i>t'}A_i\in \mathcal{U}(A_1,\cdots,A_{n-k+1}).$$
Then, we can easily take $B=B^{t'}$ to conclude $\Phi_1( x)\ge\phi(B)$.
\end{proof}
In the following, we establish the key lemma of this section, which shows that the $k$-th minimax and max-min eigenvalues of 1-Laplacian lie between $h_k(G)$ and $\ell_k(G)$. 
\begin{lemma}\label{lem:general-graph-delta}
For any weighted graph $G=(V,E,\mu,w)$, we have
\begin{equation}\label{eq:hk>lambdak>lk}
h_k(G)\ge \lambda_k^{\mathrm{ind}}(\Delta_1)\ge \ell_k(G)
\end{equation}
and \begin{equation}\label{eq:hk>lambdak2>lk}
h_k(G)\ge \underline{\lambda_k^{\mathrm{ind}}}(\Delta_1)\ge \ell_k(G) \end{equation}
\end{lemma}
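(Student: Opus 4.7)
The plan is to prove the four inequalities in Lemma \ref{lem:general-graph-delta} in parallel, using a single unified strategy that combines three ingredients already established in the paper: the intersection lemma (Lemma \ref{lem:intersect}), which asserts that an admissible test set $S$ must meet every linear subspace of sufficiently large dimension; the selection lemma (Lemma \ref{lem:levelset}), which converts the $L^1$-Rayleigh quotient on the span of a subpartition's indicators into concrete $\phi$-values of unions from $\mathcal{U}$; and Lemma \ref{lemma:basic-phi}, which controls $\phi$ of a union from above by the maximum of its constituents' $\phi$-values.

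For the two upper bounds $h_k(G) \ge \lambda_k^{\mathrm{ind}}(\Delta_1)$ and $h_k(G) \ge \underline{\lambda_k^{\mathrm{ind}}}(\Delta_1)$, I would fix a subpartition $(A_1,\ldots,A_k)$ achieving $h_k(G)$ and introduce the $k$-dimensional subspace $X := \mathrm{span}(\mathbbm{1}_{A_1},\ldots,\mathbbm{1}_{A_k})$. For any $x=\sum t_i\mathbbm{1}_{A_i}\in X\setminus\{0\}$, splitting $x=x^+-x^-$ as in \eqref{eq:F_1x+x-} and applying the layer-cake representation from the proof of Lemma \ref{lem:levelset} together with Lemma \ref{lemma:basic-phi} yields $\Phi_1(x)\le \max_i\phi(A_i)=h_k(G)$ on all of $X\setminus\{0\}$. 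For $\lambda_k^{\mathrm{ind}}$, I would take $S$ to be the $\mu$-weighted $\ell^1$-unit sphere of $X$; since $S$ is odd-homeomorphic to $\mathbb{S}^{k-1}$, the admissibility axioms (1) and (4) force $\mathrm{ind}(S)=k$, so $\lambda_k^{\mathrm{ind}}\le\max_{x\in S}\Phi_1(x)\le h_k$. For $\underline{\lambda_k^{\mathrm{ind}}}$, the same $X$ plays a dual role: by the intersection lemma every admissible $S$ meets $X$ at some nonzero point, where $\Phi_1\le h_k$; hence $\min_{x\in S}\Phi_1\le h_k$ for every admissible $S$.

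For the two lower bounds $\lambda_k^{\mathrm{ind}}(\Delta_1)\ge\ell_k(G)$ and $\underline{\lambda_k^{\mathrm{ind}}}(\Delta_1)\ge\ell_k(G)$, I would fix an arbitrary subpartition $(B_1,\ldots,B_{n-k+1})$ and set $Y:=\mathrm{span}(\mathbbm{1}_{B_1},\ldots,\mathbbm{1}_{B_{n-k+1}})$, of dimension $n-k+1$. For $\lambda_k^{\mathrm{ind}}$: the intersection lemma forces every admissible $S$ to meet $Y$, and the selection lemma applied at the intersection point gives $\Phi_1(x)\ge\min_{B\in\mathcal{U}(B_1,\ldots,B_{n-k+1})}\phi(B)$; taking the max over $x\in S$, infimum over $S$, and finally the sup over the subpartition yields $\lambda_k^{\mathrm{ind}}\ge\ell_k(G)$. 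For $\underline{\lambda_k^{\mathrm{ind}}}$: take $S$ to be the $\mu$-weighted $\ell^1$-unit sphere of $Y$, whose index equals $n-k+1$ by the same axiom-based argument; the selection lemma applied pointwise then yields $\Phi_1(x)\ge\min_{B\in\mathcal{U}}\phi(B)$ uniformly in $x\in S$, and a sup over the subpartition delivers $\underline{\lambda_k^{\mathrm{ind}}}\ge\ell_k(G)$.

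The main technical obstacle is the bookkeeping of admissible-index thresholds and matching subspace dimensions so that the intersection lemma applies correctly in each of the four cases, and verifying via the admissibility axioms that the explicitly constructed test spheres inside $X$ and $Y$ actually carry the asserted index. Once this matching is arranged, each of the four inequalities becomes a short invocation of Lemmas \ref{lem:intersect}, \ref{lem:levelset}, and \ref{lemma:basic-phi}, with no new analytic content beyond what is already developed in Section \ref{sec:tools}.
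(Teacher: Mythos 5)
Your proposal is correct and follows essentially the same route as the paper's proof: test sets built from spans of indicator vectors of subpartitions for the upper bounds, Lemma \ref{lem:intersect} to force intersection with the $(n-k+1)$-dimensional span, and Lemma \ref{lem:levelset} (splitting plus layer-cake) to pass from $\Phi_1$ to $\phi$-values of unions. The only differences are cosmetic (a $\mu$-weighted $\ell^1$-sphere in place of the Euclidean one) or expository (you write out the max-min chain that the paper dismisses as ``a similar process'').
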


\begin{proof}[Proof of Lemma \ref{lem:general-graph-delta}]
First, note that $\Phi_1(\mathbbm 1_A)=\phi(A)$ for any nonempty subset $A\subseteq V$, where $\mathbbm 1_A$ is the indicator vector of $A$. For any subpartition $(A_1,\cdots,A_k)$, the indicator vectors $\mathbbm 1_{A_1},\cdots,\mathbbm 1_{A_k}$ span a linear subspace of dimension $k$. Taking $S_k:=\mathrm{span}(\mathbbm 1_{A_1},\cdots,\mathbbm 1_{A_k})\cap\mathbb{S}^{n-1}$, it is clear that $S_k$ is the standard unit sphere of dimension $k-1$ and thus $\mathrm{ind}(S_k)=k$, i.e., $S_k\in\mathcal{I}_k$. 
Therefore, 
\begin{align*}
h_k(G)&=\min_{(A_1,\dots,A_k)\in\mathcal P_k(V)}~\max_{1\le i\le k}\Phi_1(\mathbbm 1_{A_i})
\\&=\min_{(A_1,\dots,A_k)\in\mathcal P_k(V)}~\sup_{(t_1,\cdots,t_k)\ne0}\Phi_1(t_1\mathbbm 1_{A_1}+\cdots+t_k\mathbbm 1_{A_k})
\\&\ge \inf_{S\in\mathcal{I}_k}~\sup_{x\in S}\Phi_1(x) =\lambda_k^{\mathrm{ind}}(\Delta_1).  
\end{align*}

For any subpartition $(A_1,\cdots,A_{n-k+1})$ reaching $\ell_k(G)$, we have \[\dim \mathrm{span}(\mathbbm 1_{A_1},\cdots,\mathbbm 1_{A_{n-k+1}})=n-k+1.\] Hence, for any $S\in\mathcal{I}_k$, it follows from Lemma \ref{lem:intersect} that $$S\cap \mathrm{span}(\mathbbm 1_{A_1},\cdots,\mathbbm 1_{A_{n-k+1}})\ne\varnothing.$$ Consequently, 
\begin{align}
\lambda_k^{\mathrm{ind}}(\Delta_1)&=\inf_{S\in\mathcal{I}_k}~\sup_{x\in S}\Phi_1(x) \notag
\\&\ge     \inf_{S\in\mathcal{I}_k}~\sup_{x\in S\cap \mathrm{span}(\mathbbm 1_{A_1},\cdots,\mathbbm 1_{A_{n-k+1}})}\Phi_1(x)  \notag
\\&\ge \inf_{x\in \mathrm{span}(\mathbbm 1_{A_1},\cdots,\mathbbm 1_{A_{n-k+1}})}\Phi_1(x) \notag
\\&\ge\min_{B\in \mathcal{U}(A_1,\cdots,A_{n-k+1})}\phi(B) %\Phi_1(\mathbbm 1_B)
\label{eq:min=Che}
%\\&=\min_{B\in \mathcal{U}(A_1,\cdots,A_{n-k+1})}\phi(B) \notag
\\&=\ell_k(G) \notag
\end{align}
where \eqref{eq:min=Che} is based on Lemma \ref{lem:levelset}. %standard techniques in discrete analysis \cite{Tudisco,Zhang}. 

We have then established the inequality 
\eqref{eq:hk>lambdak>lk}
for the weighted graph $G$. 
A similar process gives 
$$h_k\ge \sup_{\substack{S\text{ origin-symmetric, compact}\\\mathrm{ind}(S)\ge n-k+1}}~\min_{x\in S}\Phi_1(x)\ge \ell_k(G),$$
which establishes the inequality 
\eqref{eq:hk>lambdak2>lk}.
\end{proof}

\begin{proof}[Proof of Theorem \ref{thm:1-Lap=h}]
The equality $h_k(G)=\lambda_k^{\mathrm{ind}}(\Delta_1)=\ell_k(G)$ is a direct consequence of the equality $h_k(G)=\ell_k(G)$ for forests (as shown in Theorem \ref{thm:h_k=H_k}) and the inequalities \eqref{eq:hk>lambdak>lk} and \eqref{eq:hk>lambdak2>lk} (see Lemma \ref{lem:general-graph-delta}). 
\end{proof}

\begin{proof}[Proof of Theorem \ref{thm:nonlinearCourant-Fischer}]
Since the function $\mathbb{R}^n\setminus\{0\}\ni x\mapsto \Phi_1(x)$ is zero-homogeneous, any linear subspace $X$ can be replaced by its unit sphere $S:=X\cap \mathbb{S}^{n-1}$ such that $\dim X=\mathrm{ind}(X\cap \mathbb{S}^{n-1})=\mathrm{ind}(S)$ with nothing changes, i.e., 
$$\min_{\substack{\text{linear subspaces }X\text{ of }\mathbb R^n\\\dim X=k}}~\max_{x\in X\setminus\{0\}} \Phi_1(x)=\min_{\substack{S=X\cap \mathbb{S}^{n-1}\\\text{linear subspaces }X\text{ of }\mathbb R^n\\\dim X= k}}~\max_{x\in S}\Phi_1(x) .$$
%The linear space case possesses the same equality as above, i.e., it also equals $h_k$. 
%$$\lambda_k^{\mathrm{ind}}(\Delta_1)=\inf_{S\in\mathcal{I}_k}\sup_{x\in S}\Phi_1(x) \ge \ell_k=\max_{\text{subpartitions }(A_1,\dots,A_{n-k+1})\text{ of }V}\min_{B\in \mathcal{U}(A_1,\cdots,A_{n-k+1})}\phi(B)$$
Replacing $\inf\limits_{S\in\mathcal{I}_k}~\sup\limits_{x\in S}\Phi_1(x)$ in the proof of Lemma \ref{lem:general-graph-delta} by $\inf\limits_{\dim X=k}~\sup\limits_{x\in X}\Phi_1(x)$ yields the similar inequality
\begin{align*}h_k(G)&\ge\min\limits_{\substack{\text{linear subspaces }X\text{ of }\mathbb R^n\\\dim X= k}}~\max\limits_{x\in X\setminus\{0\}} \frac{\sum\limits_{u\sim v}w_{uv}|x_u-x_v|}{\sum\limits_v\mu_v|x_v|}\\&\ge \max\limits_{\substack{\text{linear subspaces }X\text{ of }\mathbb R^n\\\dim X= n-k+1}}~\min\limits_{x\in X\setminus\{0\}} \frac{\sum\limits_{u\sim v}w_{uv}|x_u-x_v|}{\sum\limits_v\mu_v|x_v|}
\ge \ell_k(G) .\end{align*}
Since $G$ is a forest, we can use the equality $h_k(G)=\ell_k(G)$ in Theorem \ref{thm:h_k=H_k} to derive the nonlinear Courant-Fischer-Weyl minimax equality. 
\end{proof}

%\subsubsection{Proofs of Corollaries  \ref{cor:minmax-indepen-index} and \ref{cor:tree-1-Lap-h}}
\subsubsection{Proofs of Other Results in Sections \ref{sec:minmax-eigen-1-Lap} and \ref{sec:refined-Cheeger}
}

\begin{proof}[Proof of Corollary \ref{cor:minmax-indepen-index}]
Since every index $\mathrm{ind}\in\{\gamma,\gamma^+,Y\textrm{-ind}\}$ is admissible, by Theorem \ref{thm:1-Lap=h} we obtain $h_k(G)=\lambda_k^{\gamma}(\Delta_1)=\lambda_k^{\gamma^+}(\Delta_1) =\lambda_k^{\textrm{Y-ind}}(\Delta_1)$. 
\end{proof}

\begin{proof}[Proof of Corollary \ref{cor:tree-1-Lap-h}]
Taking $\mathrm{ind}=\gamma$ in Theorem \ref{thm:1-Lap=h}, we immediately derive $h_k(G)=\lambda_k^{\gamma}(\Delta_1)$. 
\end{proof}

\begin{proof}[Proof of Corollary \ref{cor:tree-p-Lap}]
We can express the two monotonicity inequalities in  Lemma \ref{lem:p-monotonicity} as
$$
\swarrow\frac{1}{2^{p-1}}\lambda_k(\Delta_p)\le \lambda_k(\Delta_p)\le \frac p2 (2\lambda_k(\Delta_p))^{\frac1p}\nearrow
$$
where the left-hand term $\frac p2 (2\lambda_k(\Delta_p))^{\frac1p}$ is increasing with respect to $p\in[1,+\infty)$, while the right-hand term $\frac{1}{2^{p-1}}\lambda_k(\Delta_p)$ is decreasing with respect to $p\in[1,+\infty)$. 
Substituting the equality $h_k(G)=\lambda_k(\Delta_1)$ into the above inequality immediately yields the conclusion of Corollary \ref{cor:tree-p-Lap}. 
\end{proof}

\begin{proof}[Proof of Theorem  \ref{thm:beta}]
%Note that in Theorem \ref{thm:1-Lap=h}, we have shown that 
By Lemma \ref{lem:general-graph-delta}, $h_k(G)\ge \lambda_k(\Delta_1)\ge \ell_k(G)$ for any graph $G$ and any $k$. By Theorem \ref{thm:h_k=H_k}, we have $\ell_k(G)\ge \underline{\ell_k}(G)$, and according to Theorem \ref{thm:cycle-number-inequ}, we have $\underline{\ell_k}(G)\ge h_{k-\beta}(G)$. Thus, we finally obtain $\lambda_k(\Delta_1)\ge h_{k-\beta}(G)$, which completes the proof of Theorem \ref{thm:beta}.
\end{proof}

\begin{proof}[Proof of Theorem \ref{thm:beta-p}] To give a proof of Theorem \ref{thm:beta-p}, we require the monotonicity of $p$-Laplacian eigenvalues, see Lemma \ref{lem:p-monotonicity} in Section \ref{sec:tools}. In fact, we shall use the inequalities \[p(2\lambda_k(\Delta_p))^{\frac1p}\ge 2\lambda_k(\Delta_1)\text{ and }2^{-p}\lambda_k(\Delta_p)\le 2^{-1}\lambda_k(\Delta_1)\] in Lemma \ref{lem:p-monotonicity}. Combining these two inequalities with Theorem \ref{thm:beta}, we derive
$$2^{1-p}\lambda_k(\Delta_p)\le \lambda_k(\Delta_1)\le h_k(G)$$
and 
$$p(2\lambda_k(\Delta_p))^{\frac1p}\ge 2\lambda_k(\Delta_1)\ge 2h_{k-\beta}(G).$$
This proves Theorem \ref{thm:beta-p}.
\end{proof}

\begin{proof}[Proof of Corollary \ref{cor:unicyclic}]
By definition, a graph is unicyclic if and only if $\beta=1$. Then, we conclude the proof by taking $\beta=1$ in Theorem \ref{thm:beta-p}.
\end{proof}

\vspace{0.6cm}

{\bf Acknowledgements:} 
D. Zhang is supported by NSFC (no. 12401443).

\end{document}